\documentclass{amsart}

\usepackage{amssymb}
\usepackage{amsmath}
\usepackage{amsthm}
\usepackage{amsbsy}
\usepackage{bm}

\theoremstyle{plain}
\newtheorem{theorem}{Theorem}[section]
\newtheorem{lemma}[theorem]{Lemma}

\newtheorem{proposition}[theorem]{Proposition}
\newtheorem{example}[theorem]{Example}

\theoremstyle{definition}
\newtheorem{definition}[theorem]{Definition}

\theoremstyle{remark}

\newcommand{\R}{\mathbb{R}}

\newcommand{\N}{\mathbb{N}}
\newcommand{\D}{\mathbb{D}}

\newcommand{\sm}{\setminus}

\begin{document} 

\title[Metric measure spaces and distance matrices]{Lipschitz correspondence between metric measure spaces and random distance matrices}

\author{Siddhartha Gadgil}

\address{	Department of Mathematics\\
		Indian Institute of Science\\
		Bangalore 560012, India}

\email{gadgil@math.iisc.ernet.in}

\author{Manjunath Krishnapur}
\address{Department of Mathematics\\
		Indian Institute of Science\\
		Bangalore 560012, India}

\email{manju@math.iisc.ernet.in}

\thanks{Partially supported by UGC
(under SAP-DSA Phase IV)}

%\subjclass{57M99; 32G15}

\date{\today}

\begin{abstract}
Given a metric space with a Borel probability measure, for each integer $N$ we obtain a probability distribution on $N\times N$ distance matrices by considering the distances between pairs of points in a sample consisting of $N$ points chosen indepenedently from the metric space with respect to the given measure. We show that this gives an asymptotically bi-Lipschitz relation between metric measure spaces and the corresponding distance matrices. This is an effective version of a result of Vershik that metric measure spaces are determined by associated distributions on infinite random matrices.
\end{abstract}

\maketitle

\section{Introduction}

Let $(X,d)$ be a metric space and let $\mu$ be a Borel probability measure on $X$ (we shall henceforth refer to $(X,d,\mu)$ as a metric measure space). Consider a sequence $\{x_n\}_{n\in \N}$ of random points in $X$ chosen independently according to the probability measure $\mu$. We obtain a random matrix $D=(d_{ij})=(d(x_i,x_j))$ with rows and columns indexed by the positive integers $\N$. Thus, the triple $(X,d,\mu)$ gives rise to a distribution on random matrices with rows and columns indexed by $\N$ (we shall call these infinite square matrices).

This work is motivated by a result of Vershik~\cite{Ve} that the metric measure space $(X,d,\mu)$ is determined, up to measure preserving isometries, by the corresponding distribution on infinite square matrices. Our goal is to give an effective version of this result for distributions on matrices obtained by choosing a finite (but sufficiently large) collection of points. Our result in fact gives a bi-Lispschitz correspondence.

Namely, for a positive integer $N$, we sample $N$ independent points $x_1$, $x_2$,\dots,$x_N$, from a given compact metric space $X$ according to a given probability distribution $\mu$. This gives a probability distribution on $N\times N$ (symmetric) matrices. We show that there is an asymptotically bi-Lipschitz relation between metric measure spaces and the corresponding distributions on matrices. Here we use a notion of distance on metric measure spaces, which we call the \emph{Gromov-Hausdorff-Prokhorov  distance}, which is a generalisation of the Gromov-Hausdorff distance between metric spaces.

One expects that if two metric measure spaces are close, then the corresponding distributions on matrices are close for all $N$ (not just large $N$). We also prove such a result, but in this case the relation is H\"older with optimal exponent $1/2$. Finally, in Section~\ref{largedev} we introduce a quantity, which we call the relative entropy of metric measure spaces, and conjecture a large deviations result.

We remark that the distributions on matrices are naturally related to metric notions of curvature. In particular, CAT($\kappa$) spaces and, more generally, Wirtinger spaces~\cite{Gr2}, can be viewed as defined in terms of the support of distribution on matrices. On the other hand, the Ricci curvature is related to the measure of cones, which can be related to distance matrices for collections of points with one point fixed and the others chosen at random. It would be interesting to study the continuity of optimal transport with respect to the Gromov-Hausdorff-Prokhorov  distance.

\section{The distance between metric measure spaces}

In this section, we recall the definitions of a metric on probability measures on a given metric space and the Gromov-Hausdorff distance between compact metric spaces. We then introduce a notion of distance between metric measure spaces, which is a combination of these definitions. We  show that our definition indeed gives a metric when we consider metric spaces with measures having full support. Further, we prove that an analogue of Gromov's compactness theorem holds for metric measure spaces.

\subsection{Distance between distributions}
Let $\mu_1$ and $\mu_2$ be Borel probability measures on a given metric space $(Z,d_Z)$. Let $\pi_i:Z\times Z\to Z$, $i=1,2$ be the projection maps. Consider probability measures $\theta$ on $Z\times Z$ so that the marginal distributions satisfy $\pi_{i*}(\theta)=\mu_i$ for $i=1,2$. For such a measure $\theta$, we define $\Delta(\theta)=\Delta(\theta;d_Z)$ by
$$\Delta(\theta)=\inf\{r>0: \theta(\{(z_1,z_2)\in Z\times Z: d_Z(z_1,z_2)\leq r\})\geq 1-r\}.$$ 

We define the L\'evy-Prokhorov distance between $\mu_1$ and $\mu_2$ to be the infimum, 
$$d_P(\mu_1,\mu_2)=\inf\{\Delta(\theta): \text{$\theta$ measure on $Z\times Z$}, \ \pi_{i*}(\theta)=\mu_i,i=1,2\}.$$

We shall sometimes denote this as $d_P(\mu_1,\mu_2;Z)$ or $d_P(\mu_1,\mu_2;d_Z)$ to clarify the underlying metric space.

Note that the distance $d_P$ has another equivalent formulation. Namely, we consider random variables $X_i(\omega)\in Z$, $i=1,2$, on a sample space $\Omega$ with probability measure $P$, so that the distributions of $X_i(\omega)$ is $\mu_i$ for $i=1,2$. For such a pair $(X_1,X_2)$ of random variables, we consider $$\Delta(X_1,X_2)=\inf\{r>0: P(\{\omega\in \Omega: d_Z(X_1(\omega),X_2(\omega))\leq r\})\geq 1-r\}.$$ 
Then $\Delta(\theta)$ is the infimum of $\Delta(X_1,X_2)$ over all pairs $(X_1,X_2)$ of random variables so that the marginal distribution of $X_i$ is $\mu_i$ for $i=1,2$ (we can see  by considering the pushforward of the measure $P$ on $\Omega$ to $Z\times Z$ using the map $\omega\mapsto (X_1(\omega),X_2(\omega))$).

\subsection{Gromov-Hausdorff distance}

Let $(X_1,d_1)$ and $(X_2,d_2)$ be compact metric spaces.  Consider pairs of isometric embeddings $\iota_i:X_1\to Z$, $i=1,2$ of $X_1$ and $X_2$ into a metric space $Z$. For such embeddings, we can consider the Hausdorff distance $d_H$ between $\iota_1(X_1)$ and $\iota_2(X_2)$  (as subsets of  $Z$).

The Gromov-Hausdorff distance~\cite{Gr1} between $X_1$ and $X_2$ is defined as the infimum of such Hausdorff distances, i.e.,
$$d_{GH}(X_1,X_2)=\inf\{d_H(\iota_1(X_1),\iota_2(X_2)): \text{$\iota_i:X_i\to Z$ isometric embeddings}\}$$
where $Z$ in the infimum varies over all (compact) metric spaces. 

\subsection{Distance between metric measure spaces} We now define the \emph{Gromov-Hausdorff-Prokhorov } distance between a pair of metric measure spaces $(X_i,d_i,\mu_i)$, $i=1,2$, with the underlying metric space $X_i$ assumed to be compact. Consider isometric embeddings $\iota_i:X_i\to Z$, $i=1,2$, of the spaces $X_i$ into a metric space $Z$. These give rise to pushforward probability measures $\iota_{i*}(\mu_i)$ on $Z$. The distance between the metric measure spaces is the infimum of the distance between the pushforward probability measures over all isometric embeddings, i.e.,
$$d_{GHP}(X_1,X_2)=\inf\{d_P(\iota_1(\mu_1),\iota_2(\mu_2)): \text{$\iota_{i*}:X_i\to Z$ isometric embeddings}\}$$
where $Z$ in the infimum varies over all (compact) metric spaces.

We can identify the spaces $X_i$ with their images in $Z$. Further, we can assume that, under these identifications, $Z=X_1\cup X_2$. We shall often make such identifications and identify the measures $\mu_i$ with the corresponding pushforward measures on $Z$. Further, we often suppress the  measures from the notation if they are clear from the context.

It is clear that the distance is symmetric. We shall prove the triangle inequality, showing that we get a pseudo-metric. We also show an appropriate positivity result, showing in particular that we get a genuine metric on metric measure spaces for which the measure has full support. 

We remark that the definition of the Gromov-Hausdorff-Prokhorov  distance works even for pseudo-metric spaces, i.e., where we allow $d(x,y)=0$ even if $x\neq y$. All our results also hold in this case.

\subsection{The triangle inequality}

Let $(X_1,\mu_1)$, $(X_2,\mu_2)$ and $(Y,\nu)$ be metric, measure spaces. 

\begin{proposition}\label{triang}
We have
$$d_{GHP}(X_1,X_2)\leq d_{GHP}(X_1,Y)+ d_{GHP}(X_2,Y).$$
\end{proposition}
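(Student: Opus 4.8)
The plan is to prove the triangle inequality by the standard ``gluing of embeddings'' technique, adapted to carry along the measures and to track the Prokhorov distance rather than the Hausdorff distance. Fix $\varepsilon > 0$. By definition of $d_{GHP}$, there is a compact metric space $Z_1$ with isometric embeddings $\iota_1 \co X_1 \to Z_1$ and $\kappa_1 \co Y \to Z_1$ such that $d_P(\iota_{1*}\mu_1, \kappa_{1*}\nu; Z_1) < d_{GHP}(X_1,Y) + \varepsilon$, and likewise a compact metric space $Z_2$ with isometric embeddings $\kappa_2 \co Y \to Z_2$ and $\iota_2 \co X_2 \to Z_2$ witnessing $d_P(\kappa_{2*}\nu, \iota_{2*}\mu_2; Z_2) < d_{GHP}(X_2,Y) + \varepsilon$. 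As remarked in the paper, we may assume $Z_1 = X_1 \cup Y$ and $Z_2 = Y \cup X_2$ as sets, with $Y$ literally a common subset.

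Next I would form the amalgam $Z = X_1 \cup Y \cup X_2$, glued along the shared copy of $Y$, and put on it the maximal metric $d_Z$ that restricts to $d_{Z_1}$ on $X_1 \cup Y$ and to $d_{Z_2}$ on $Y \cup X_2$; concretely, for $a \in X_1$ and $b \in X_2$ one sets $d_Z(a,b) = \inf_{y \in Y}\bigl(d_{Z_1}(a,y) + d_{Z_2}(y,b)\bigr)$, and keeps the original distances on each half. One checks this is a genuine metric (the triangle inequality for mixed triples reduces to the triangle inequalities in $Z_1$ and $Z_2$ and the fact that $Y$ sits isometrically in both), that it is compact, and that the tautological maps $X_1 \hookrightarrow Z$, $Y \hookrightarrow Z$, $X_2 \hookrightarrow Z$ are isometric embeddings extending the given ones. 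Crucially $d_Z$ restricted to $X_1 \cup Y$ equals $d_{Z_1}$ and restricted to $Y \cup X_2$ equals $d_{Z_2}$, so the witnessing couplings transport unchanged.

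It then remains to bound $d_P(\mu_1, \mu_2; Z)$, where I identify the measures with their pushforwards to $Z$. For this I use the random-variable formulation of $d_P$ from the paper together with a triangle-inequality-type estimate for the L\'evy--Prokhorov distance: if $d_P(\mu_1,\nu;Z) < r$ and $d_P(\nu,\mu_2;Z) < s$, then $d_P(\mu_1,\mu_2;Z) < r + s$. To see this, take a coupling $\theta_{12}$ of $(\mu_1,\nu)$ on $Z\times Z$ with $\Delta(\theta_{12}) < r$ and a coupling $\theta_{23}$ of $(\nu,\mu_2)$ with $\Delta(\theta_{23}) < s$; glue them over the common marginal $\nu$ (via a regular conditional disintegration of each over $Z$, i.e. the gluing lemma for couplings) to get a measure on $Z\times Z\times Z$ with marginals $\mu_1,\nu,\mu_2$, and project to the first and third coordinates to get a coupling $\theta_{13}$ of $(\mu_1,\mu_2)$. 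On the event $\{d_Z(z_1,z_2)\le r\}\cap\{d_Z(z_2,z_3)\le s\}$, which has probability at least $1-r-s$, the triangle inequality in $Z$ gives $d_Z(z_1,z_3)\le r+s$; hence $\Delta(\theta_{13}) \le r+s$. Applying this with $r = d_{GHP}(X_1,Y)+\varepsilon$ and $s = d_{GHP}(X_2,Y)+\varepsilon$ (enlarged slightly so the strict inequalities hold) yields $d_{GHP}(X_1,X_2) \le d_P(\mu_1,\mu_2;Z) \le d_{GHP}(X_1,Y) + d_{GHP}(X_2,Y) + 2\varepsilon$, and letting $\varepsilon \to 0$ finishes the proof.

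I expect the main technical obstacle to be the gluing step for the couplings over the common marginal $\nu$ — i.e. justifying the disintegration and reassembly cleanly on a general compact metric space — and, to a lesser extent, the bookkeeping needed to check that $d_Z$ on the amalgam $Z$ is an honest metric (non-degeneracy of the infimum defining cross-distances) that restricts correctly to each half so that the given optimal couplings remain valid couplings on $Z \times Z$. Both are standard, but they are where the care is required; everything else is formal manipulation of the definitions of $\Delta$ and $d_P$.
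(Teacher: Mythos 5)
Your proof is correct and follows essentially the same route as the paper: glue $Z_1$ and $Z_2$ along the common copy of $Y$ with the maximal metric and then apply the triangle inequality for $d_P$ in the amalgam. The only difference is that you spell out the proof of the Prokhorov triangle inequality via the gluing lemma for couplings, which the paper simply invokes as known; your version of that step is also correct.
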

\begin{proof}
Let $\epsilon>0$ be arbitrary. By definition, for $i=1,2$, we can find spaces $Z_i=X_i\cup Y$ so that the distance between (the pushforwards of) the measures $\mu_i$ and $\nu$ is at most $d_{GHP}(X_i,Y)+\epsilon$. 

Now, let $Z$ be the metric space obtained from  $Z_1\coprod Z_2$ by identifying the isometric copies of $Y$, and with distance the maximal metric whose restriction to each $Z_i$ is the given metric. More concretely, the metric $d_Z$ on $Z$ is given by
\begin{enumerate}
\item If both $x_1$ and $x_2$ lie in some $Z_i$, $d_Z(x_1,x_2)=d_{Z_i}(x_1,x_2)$ 
\item If $x_1\in Z_1$ and $x_2\in Z_2$, 
$d_Z(x_1,x_2)=\inf\{d_{Z_1}(x_1,y)+d_{Z_2}(y,x_2): y\in Y\}.$
\end{enumerate}
It is easy to see that $d_Z$ defined as above gives a metric whose restriction to each $Z_i$ is the given metric on $Z_i$. The triangle inequality implies that this is indeed the maximal such metric. 

The measures $\mu_i$ and $\nu$ push forward to give measures on $Z$, with the the distance between the pushforwards in $Z$ of the measures $\mu_i$ and $\nu$  at most $d_{GHP}(X_i,Y)+\epsilon$ for $i=1,2$. By the triangle inequality for $d_P$, it follows that
$$d_{GHP}(X_1,X_2)\leq d_P(\mu_1,\mu_2)\leq d_{GHP}(X_1,Y)+ d_{GHP}(X_2,Y)+2\epsilon.$$

As $\epsilon>0$ was arbitrary, the result follows.
\end{proof}

\subsection{Positivity of the distance function}

As the definition of $d_{GHP}$ ignores, for example, isolated points that have measure $0$ (which is to be expected for stochastic concepts), we do not expect  $d_{GHP}(X,Y)=0$ to imply that $X=Y$, but only that this is true up to ignoring an appropriate class of sets with measure $0$. We now prove such a result.

\begin{theorem}\label{posit}
For two metric, measure spaces $(X,\mu)$ and $(Y,\nu)$, $d_{GHP}(X,Y)=0$ if and only if there are open sets $U\subset X$ and $V\subset Y$ of measure $0$ so that there is a measure preserving isometry between $X\sm U$ and $Y\sm V$.
\end{theorem}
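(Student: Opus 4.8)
The plan is to prove the two directions separately. For the easy ("if") direction, suppose we are given open null sets $U\subset X$, $V\subset Y$ and a measure-preserving isometry $\phi\co X\sm U\to Y\sm V$. Form $Z = X\cup Y$ by gluing $X$ and $Y$ along $\phi$, i.e.\ take the metric quotient of $X\coprod Y$ in which each $x\in X\sm U$ is identified with $\phi(x)$; equip $Z$ with the maximal metric restricting to the given ones on $X$ and $Y$, as in the proof of Proposition~\ref{triang}. Under this identification the pushforwards $\iota_{1*}\mu$ and $\iota_{2*}\nu$ agree: both assign to a Borel set $A\subset Z$ the value $\mu(A\cap X)=\nu(A\cap Y)$, since the discrepancy is supported on $U\cup V$, which has measure $0$ under both. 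Hence we may take $\theta$ to be the diagonal coupling (the pushforward of $\mu$ under $z\mapsto(z,z)$), giving $d_P(\iota_1\mu,\iota_2\nu;Z)=0$, so $d_{GHP}(X,Y)=0$.

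The substantive direction is "only if". Assume $d_{GHP}(X,Y)=0$. Then there is a sequence of metric spaces $Z_n = X\cup Y$ (identifying $X,Y$ isometrically with their images) and couplings $\theta_n$ on $Z_n\times Z_n$ with marginals $\mu,\nu$ and $\Delta(\theta_n)\to 0$. The strategy is to extract a limiting common embedding. First I would pass to a subsequence along which the metrics $d_{Z_n}$ restricted to the "cross" distances $d_{Z_n}(x,y)$ ($x\in X$, $y\in Y$) converge in an appropriate sense; concretely, one embeds everything isometrically into $\ell^\infty$ of a countable dense set and uses compactness (Arzelà--Ascoli / Blaschke) to get a limit metric space $Z_\infty = X\cup Y$ with isometric copies of both $X$ and $Y$ and a coupling $\theta_\infty$ of $\mu,\nu$ with $\Delta(\theta_\infty)=0$, i.e.\ $\theta_\infty$ is supported on the diagonal $\{(z,z):z\in Z_\infty\}$. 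This says $\mu=\nu$ as measures on $Z_\infty$. Now let $S=\operatorname{supp}(\mu)=\operatorname{supp}(\nu)\subset Z_\infty$. The set $X\cap S$ is a closed subset of $X$ whose complement $U=X\sm S$ is open; since $S\subset Y$ we have $\mu(U)=\mu(X\sm S)=0$, and similarly $V=Y\sm S$ is open in $Y$ with $\nu(V)=0$. Finally, $X\sm U = X\cap S = S = Y\cap S = Y\sm V$ as subsets of $Z_\infty$, and this identification is an isometry (it is the restriction of $d_{Z_\infty}$) which carries $\mu|_{X\sm U}$ to $\nu|_{Y\sm V}$ because both equal $\mu|_S$. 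This is the desired measure-preserving isometry.

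I would organize the compactness step carefully, since it is the main obstacle. The point is that a coupling $\theta_n$ with small $\Delta(\theta_n)$ does not directly bound all cross-distances $d_{Z_n}(x,y)$; it only controls them on a set of $\theta_n$-measure $\geq 1-\Delta(\theta_n)$. One cleanly handles this by choosing, for each $n$, a countable dense set $P\subset X$ and $Q\subset Y$ (fixed once and for all, independent of $n$, using separability of compact metric spaces) and defining an auxiliary finite-valued "distance" on $P\cup Q$ via $d_{Z_n}$; after a diagonal subsequence these converge pointwise on $(P\cup Q)^2$, and one checks the limit satisfies the triangle inequality, hence completes to a pseudo-metric on $Z_\infty = X\cup Y$ (passing to the metric quotient if necessary, which is harmless and in fact exactly produces the gluing). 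The couplings $\theta_n$, viewed as measures on the compact space $X\times Y$ with its fixed product topology, have a weak-$*$ limit $\theta_\infty$ whose marginals are still $\mu,\nu$; and the condition $d_{Z_n}(x,y)\leq r$ on a set of measure $\geq 1-r$, combined with $d_{Z_n}\to d_{Z_\infty}$, forces $\theta_\infty$ onto $\{d_{Z_\infty}=0\}$, which after the metric quotient is the diagonal. A minor technical wrinkle to address is that a priori the limit metric on $X$ (resp.\ $Y$) obtained this way might differ from the original; but since each $\iota_i$ is an isometric embedding for every $n$, the restriction of $d_{Z_n}$ to $X$ (resp.\ $Y$) is constant in $n$ and equal to $d_X$ (resp.\ $d_Y$), so no such issue arises.
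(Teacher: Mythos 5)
Your proof is correct, and its overall architecture matches the paper's: the forward direction is the same gluing construction with the diagonal coupling, and the converse proceeds by extracting a limit ambient space in which the pushforwards of $\mu$ and $\nu$ coincide and then discarding the complement of the common part. Where you genuinely diverge is the compactness step, which (as you say) is the main obstacle. The paper applies Gromov's precompactness theorem to the sequence $Z_i=X\cup Y$ to obtain a Gromov--Hausdorff limit $Z$, and then runs an Arzel\`a--Ascoli argument on the almost-isometries $f_i\co X\to Z$, $g_i\co Y\to Z$ to recover genuine isometric embeddings of $X$ and $Y$ into the limit. You instead keep the underlying set $X\coprod Y$ fixed and let only the cross-distance function $d_{Z_n}(x,y)$ vary; since this function is $1$-Lipschitz in each variable with respect to the fixed metrics $d_X,d_Y$ and uniformly bounded (one pair $(x_0,y_0)$ with $d_{Z_n}(x_0,y_0)$ small, guaranteed by $\Delta(\theta_n)\to 0$, bounds all cross-distances via the triangle inequality), a diagonal extraction over countable dense sets plus equicontinuity yields a limit pseudo-metric whose restrictions to $X$ and $Y$ are automatically the originals. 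This buys you a more elementary and self-contained argument --- no appeal to Gromov compactness, and the isometric-embedding property of the limit comes for free rather than having to be re-derived --- at the cost of checking the triangle inequality and the extension from the dense set by hand. The endgame differs only cosmetically: you take $U=X\sm\operatorname{supp}\mu$ and $V=Y\sm\operatorname{supp}\nu$, whereas the paper takes the complements of $X\cap Y$ inside the limit space; both give open null sets with the identity on a common closed subset as the measure-preserving isometry.
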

\begin{proof}
Given a measure preserving isometry between $X\sm U$ and $Y\sm V$, with $U$ and $V$ open sets with measure zero, we take $Z$ to be the space obtained from $X\coprod Y$ by identifying $X\sm U$ and $Y\sm V$ using the given measure preserving isometry, with the metric on $Z$ the maximal metric whose restrictions to $X$ and $Y$ are the given metrics (analogous to the one constructed in Proposition~\ref{triang}). Then the pushforward measures on $Z$ under the inclusion maps are equal. It follows that $d_{GHP}(X,Y)=0$.

Conversely, by hypothesis, there is a sequence of metric spaces $Z_i=X\cup Y$ so that the distance between the pushforwards in $Z_i$ of the measures $\mu$ and $\nu$ converges to $0$. We shall first construct a limit of these spaces.

It is easy to see that there is a uniform bound on the diameter of the spaces $Z_i$. Further, as $Z_i=X\cup Y$, given $\epsilon>0$ there is an integer $N=N(\epsilon)$, independent of $i$, so that there is a finite set $F_i\subset Z_i$ with cardinality at most $N$ so that the each point in $Z_i$ has distance at most $\epsilon$ from $F_i$. Hence, by Gromov's compactness theorem for metric spaces, on passing to a subsequence, $Z_i$ converges in the Gromov-Hausdorff metric to a compact metric space $Z$.

By an equivalent definition of the Gromov-Hausdorff distance, we have a sequence $\epsilon_i\to 0$ and maps $\varphi_i:Z_i\to Z$ so that 
$$\vert d(\varphi_i(p),\varphi_i(q))-d(p,q)\vert<\epsilon_i,\ \forall p,q\in Z_i .$$

Hence, on composing with the maps from $X$ and $Y$ to $Z_i$, we get maps $f_i:X\to Z$ and $g_i:Y\to Z$ that satisfy the analogous conditions
$$\vert d(f_i(p),f_i(q))-d(p,q)\vert<\epsilon_i,\ \forall p,q\in X, $$
and
$$\vert d(g_i(p),g_i(q))-d(p,q)\vert<\epsilon_i,\ \forall p,q\in Y.$$
 As in the proof of the Arzela-Ascoli theorem, we can pass to a subsequence to obtain limiting maps $f:X\to Z$ and $g:Y\to Z$ that are isometric embeddings.

Further, we have measures $\theta_i$ on $Z_i\times Z_i$ with marginals the pushforwards of the measures $\mu$ and $\nu$ to $Z_i$, so that $\Delta(\theta_i)\to 0$. On passing to a subsequence, the pushforwards to $Z\times Z$ of the measures $\theta_i$ converge to a measure $\theta$ on $Z$, with marginals of $\theta$ the pushforwards of the measures $\mu$ and $\nu$ and with $\Delta(\theta)=0$. It follows that the pushforward measures coincide.

In particular, identifying $X$ and $Y$ with their images, the measures are supported on $X\cap Y$ ($X\cap Y$ is defined by viewing $X$ and $Y$  as subsets of $Z$). Hence, if $U=X\sm (X\cap Y)$ and $V=Y\sm (X\cap Y)$, $U$ and $V$ are open sets of zero measure in $X$ and $Y$, respectively (as $X\cap Y$ is compact, hence closed in $X$ and $Y$), and there is a measure preserving isometry between $X\sm U$ and $Y\sm V$. 

\end{proof}

In particular if we restrict to metric measure spaces with the measure having full support, $d_{GHP}$ is a genuine metric.

\subsection{A compactness theorem}

We shall show that an analogue of Gromov's compactness theorem holds for metric measure spaces.

\begin{theorem}\label{cpct}
Let $(X_n,d_n,\mu_n)$ be a sequence of compact metric measure spaces so that 
\begin{enumerate}
\item there is a uniform bound $D>0$ on the diameter of the spaces $X_n$.
\item for each $\epsilon>0$ there is an integer $N(\epsilon)$ so that, for each $n\in\N$, $X_n$ contains an $\epsilon$-net with cardinality at most $N(\epsilon)$.
\end{enumerate}
Then there is a subsequence $X_{n_i}$ that converges in the Gromov-Hausdorff-Prokhorov  metric.
\end{theorem}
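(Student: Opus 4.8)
The plan is to reduce the Gromov-Hausdorff-Prokhorov compactness statement to the ordinary Gromov compactness theorem for metric spaces together with Prokhorov's theorem on tightness of probability measures, essentially mimicking the limiting construction already carried out in the proof of Theorem~\ref{posit}. First I would apply Gromov's compactness theorem to the underlying metric spaces $X_n$: hypotheses (1) and (2) are exactly the uniform boundedness and uniform total boundedness conditions in that theorem, so after passing to a subsequence (not relabelled) the spaces $X_n$ converge in the Gromov-Hausdorff metric to a compact metric space $(X,d)$. Equivalently, one can embed all the $X_n$ and the limit $X$ isometrically into a single compact metric space $Z$ — for instance realize the Gromov-Hausdorff convergence inside $Z=\bigl(\coprod_n X_n\bigr)\sqcup X$ with a suitable metric, or invoke the standard fact that a Gromov-Hausdorff convergent sequence of compact spaces can be simultaneously embedded in a common compact $Z$ so that $X_n\to X$ in the Hausdorff metric in $Z$. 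I will fix such a $Z$ and regard each $\mu_n$ as a Borel probability measure on $Z$ via the embedding.

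Next I would extract a limiting measure. The family $\{\mu_n\}$ consists of probability measures supported in the compact set $Z$, hence is tight, so by Prokhorov's theorem a further subsequence converges weakly to a Borel probability measure $\mu$ on $Z$. Because $X_n\to X$ in the Hausdorff metric and each $\mu_n$ is supported on $X_n$, the weak limit $\mu$ is supported on $X$ (any open set disjoint from $X$ is eventually disjoint from $X_n$, so has $\mu$-measure zero; alternatively use that $X=\bigcap_m \overline{\bigcup_{n\ge m}X_n}$ and a portmanteau argument). Thus $(X,d,\mu)$ is a compact metric measure space, the candidate limit.

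The remaining step is to show $d_{GHP}(X_n,X)\to 0$ along this subsequence. Since $X_n$ and $X$ are already isometrically embedded in the common space $Z$, by definition $d_{GHP}(X_n,X)\le d_P(\mu_n,\mu;Z)$, so it suffices to show $d_P(\mu_n,\mu;Z)\to 0$. This is exactly the classical fact that on a separable metric space the L\'evy-Prokhorov distance metrizes weak convergence of probability measures, and we already have $\mu_n\to\mu$ weakly in $Z$; one can either cite this or, to stay self-contained, note that $d_P(\mu_n,\mu;Z)\to 0$ follows by constructing near-optimal couplings from the weak convergence together with compactness of $Z$ (cover $Z$ by finitely many sets of diameter $<\epsilon$, use weak convergence of the $\mu_n$-masses of these sets to build a coupling $\theta_n$ with $\Delta(\theta_n)<2\epsilon$ for large $n$). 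Combining the three steps gives $d_{GHP}(X_{n_i},X)\to 0$, proving the theorem.

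The main obstacle — really the only subtle point — is bookkeeping the two compactness arguments in a compatible ambient space: one must make sure that the metric space $Z$ into which everything is embedded is the \emph{same} for the Gromov-Hausdorff convergence of the $X_n$ and for the weak convergence of the $\mu_n$, and that the limit measure genuinely lives on the Gromov-Hausdorff limit $X$ rather than on some larger set. Once $Z$ is fixed and compact this is routine, but it is the place where care is needed; everything else is a direct appeal to Gromov's theorem, Prokhorov's theorem, and the fact that $d_P$ metrizes weak convergence.
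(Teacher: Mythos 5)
Your proof is correct and follows essentially the same route as the paper: Gromov compactness for the underlying metric spaces, then compactness of probability measures on a fixed compact space to extract a limiting measure, then the observation that once everything is isometrically embedded in one space, $d_{GHP}$ is controlled by $d_P$, which metrizes weak convergence. The only cosmetic difference is that the paper transports each $\mu_n$ to the Gromov--Hausdorff limit $Z$ via the almost-isometries $f_n$ (coupling $\mu_n$ with $f_{n*}\mu_n$ along the graph of $f_n$ to get $d_{GHP}(X_n,(Z,f_{n*}\mu_n))\le 1/n$ explicitly), whereas you embed all the $X_n$ and the limit simultaneously into a common compact ambient space; both devices serve the same purpose of placing all the measures on one compact space so that weak-$*$ compactness applies.
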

\begin{proof}
By Gromov's compactness theorem for metric spaces, on passing to a subsequence, we can ensure that the metric spaces $(X_n,d_n)$ converge to a metric space $(Z,d_Z)$. By an equivalent formulation of the Gromov-Hausdorff distance, we can choose the subsequence so that there are functions $f_n:X_n\to Z$ so that for all $n$,
\begin{equation}\label{almstisom}
\vert d_Z(f_n(x),f_n(y))-d_n(x,y)\vert<\frac{1}{n}.
\end{equation}

Consider the pushforward measures $\eta_n=f_{n*}(\mu_n)$ on $Z$. We first bound the distance between $(X_n,d_n,\mu_n)$ and $(Z,d_Z,\eta_n)$. Namely, consider the maximal metric $d_W$ on $W=X_n\coprod Z$ so that 
\begin{enumerate}
\item if $x_1,x_2\in X_n$, $d_W(x_1,x_2)\leq d_n(x_1,x_2)$.
\item if $x_1,x_2\in Z$, $d_W(x_1,x_2)\leq d_Z(x_1,x_2)$.
\item if $x\in X_n$, $d(x,f_n(x))\leq 1/n$.
\end{enumerate}
Concretely, we consider $d'_W:W\times W\to\R$ to be the symmetric function defined by:
\begin{enumerate}
\item if $x_1,x_2\in X_n$, $d'_W(x_1,x_2)=d_n(x_1,x_2)$.
\item if $x_1,x_2\in Z$, $d'_W(x_1,x_2)= d_Z(x_1,x_2)$.
\item if $x_1\in X_n$ and $x_2\in Z$, then \\
$d'_W(x_1,x_2)=\inf\{d(x_1,x)+\frac{1}{n}+d(f(x),x_2): x\in X_n\}. $
\end{enumerate}
The function $d'_W$ satisfies the triangle inequality and hence gives a metric on $W$. Further, $d'_W$ satisfies the conditions required by $d_W$. By the triangle inequality we can see that any metric satisfying the conditions required for $d_W$ is bounded above by $d'_W$, so $d_W=d'_W$ by maximality.

In particular the inclusion maps from $X_n$ and $Z$ are isometric embeddings in $Z_n$ (as they are for the metric $d'_W$ by construction). Let $\nu_n$ be the measure on the product $Z_n\times Z_n$ obtaining by pushing forward the measure $\mu_n$ using the map $x\mapsto (x,f_n(x))$. Then the marginals of $\nu_n$ are $\mu_n$ and $\eta_n$ and $\Delta(\nu_n)\leq 1/n$. This shows that $d_{GHP}((X_n,d_n,\mu_n), (Z,d_Z,\eta_n))\leq 1/n$.

As the measures $\eta_n$ are all Borel probability measures, on passing to a subsequence these converge to a measure $\eta$. We can thus assume that $d_P(\eta_n,\eta)<1/n$, and hence $d_{GHP}((Z,d_Z,\eta_n), (Z,d_Z,\eta))\leq 1/n$. By the triangle inequality, it follows that
$d_{GHP}((X_n,d_n,\mu_n), (Z,d_Z,\eta))\leq \frac{2}{n}$.

%$$d_{GHP}((X_n,d_n,\mu_n), (Z,d_Z,\eta))\leq d_{GHP}((X_n,d_n,\mu_n), (Z,d_Z,\eta_n))+d_{GHP}((Z,d_Z,\eta_n), (Z,d_Z,\eta))\leq \frac{2}{n}$$
\end{proof}

\section{Distance between square matrices}
To make uniform statements, it will be convenient to introduce a modified distance function on the space $M(N)$ of $N\times N$ symmetric square matrices by allowing some rows and corresponding columns to be excluded. Namely, for matrices $A=(a_{ij})$ and $B=(b_{ij})$, we let
$$d_M(A,B)=\inf\{\rho>0: \exists \lambda\subset\{1,2,\dots,N\}, \vert\lambda\vert<N\rho,\ d(a_{ij},b_{ij})<\rho\ \forall i,j\notin\lambda\}$$ 

Note that this is a pseudo-metric, but we can use pseudo-metrics in place of metrics in most constructions. Further, note that for any fixed $N$ this coincides with the supremum metric for a pair of matrices $A,B$ with $d_M(A,B)<1/N$. 

The permutation group $S_N$ on $N$ letters acts on $M(N)$ by simultaneous permutations of rows and columns. We have a corresponding distance $d_\pi$, with matrices close in $d_\pi$ if they are close in the previous sense up to permutation. More precisely,
$$d_\pi(A,B)=\min\{d_M(A,\pi B): \pi\in S_N\}.$$

These induce distances on the space of distributions on $M(N)$. We observe that these induced distances coincide for distributions that are invariant under the symmetric group. Let $\mu_1$ and $\mu_2$ be distributions on $M(N)$ that are invariant under the action of $S_N$. We remark that the following proposition holds (with the same proof) for more general group actions.

\begin{proposition}\label{gpaction}
For measures $\mu_1$ and $\mu_2$ invariant under the action of $S_N$, $$d_P(\mu_1,\mu_2;d_M)=d_P(\mu_1,\mu_2,d_\pi).$$
\end{proposition}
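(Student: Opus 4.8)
The plan is to establish the two inequalities separately. One of them needs no invariance: since $d_\pi(A,B)\le d_M(A,B)$ for all $A,B$ (take the identity permutation in the definition of $d_\pi$), every probability measure $\theta$ on $M(N)\times M(N)$ satisfies $\Delta(\theta;d_\pi)\le\Delta(\theta;d_M)$, and taking the infimum over couplings of $\mu_1$ and $\mu_2$ gives $d_P(\mu_1,\mu_2;d_\pi)\le d_P(\mu_1,\mu_2;d_M)$. The content of the proposition is the reverse inequality, and this is where the $S_N$-invariance of $\mu_1$ and $\mu_2$ enters.

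For the reverse inequality I would use the random-variable formulation of $d_P$. Fix a coupling $\theta$ of $\mu_1$ and $\mu_2$; it is enough to show that whenever $\theta(\{(A,B):d_\pi(A,B)\le r\})\ge 1-r$ one has $d_P(\mu_1,\mu_2;d_M)\le r$. Realise $\theta$ as a pair $(X,Y)$ of $M(N)$-valued random variables with marginal laws $\mu_1$ and $\mu_2$. Since $S_N$ is finite and, for each fixed $\pi$, the map $(A,B)\mapsto d_M(A,\pi B)$ is continuous on $M(N)\times M(N)$ (one checks $d_M(A,B)=\min_{\lambda}\max(|\lambda|/N,\ \max_{i,j\notin\lambda}|a_{ij}-b_{ij}|)$, a minimum of finitely many continuous functions), one may choose a Borel map $g\co M(N)\times M(N)\to S_N$ with $d_M(A,g(A,B)B)\le r$ whenever $d_\pi(A,B)\le r$ — say, the first permutation in a fixed enumeration that realises the minimum, with $g\equiv e$ off the set $\{d_\pi\le r\}$. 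Now enlarge the probability space to carry a permutation $U$ uniformly distributed on $S_N$ and independent of $(X,Y)$, and set $X'=UX$ and $Y'=U\,g(X,Y)\,Y$; let $\theta'$ be the law of $(X',Y')$.

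It then remains to verify that $\theta'$ is again a coupling of $\mu_1$ and $\mu_2$ and that it is good for $d_M$ at radius $r$. The first marginal is immediate: $U$ is uniform and independent of $X\sim\mu_1$, so $UX\sim\mu_1$ by invariance of $\mu_1$. For the second marginal the key observation is that, conditionally on $(X,Y)$, the product $U\,g(X,Y)$ is still uniform on $S_N$; since this holds for every value of $(X,Y)$, the permutation $U\,g(X,Y)$ is uniform on $S_N$ and independent of $Y$, whence $Y'=(U\,g(X,Y))Y\sim\mu_2$ by invariance of $\mu_2$. Finally $S_N$ acts on $(M(N),d_M)$ by isometries — simultaneously permuting rows and columns of both matrices changes neither the entrywise differences nor the size of an excluded index set — so $d_M(X',Y')=d_M(UX,\,U(g(X,Y)Y))=d_M(X,g(X,Y)Y)$, which is $\le r$ on the event $\{d_\pi(X,Y)\le r\}$ of probability at least $1-r$. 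Hence $\Delta(\theta';d_M)\le r$ and $d_P(\mu_1,\mu_2;d_M)\le r$, as required; combining with the first paragraph gives the equality.

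The step I expect to be the main obstacle is getting the second marginal of $\theta'$ right: simply replacing $Y$ by $g(X,Y)Y$ would redistribute mass according to which permutation happens to align the two samples, distorting the law of $Y$, so one cannot merely align the samples pointwise. Composing with the independent uniform $U$ repairs this, and this is exactly where invariance is used — invariance of $\mu_2$ to keep the second marginal and of $\mu_1$ to keep the first. The same argument works verbatim for any finite group, and with Haar measure in place of the uniform distribution for any compact group acting by isometries, which is the generality mentioned in the remark preceding the statement.
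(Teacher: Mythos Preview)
Your proof is correct and follows essentially the same route as the paper's: both align the second coordinate via a measurable choice of optimal permutation and then average over the diagonal $S_N$-action to restore the marginals, using that $d_M$ is invariant under this action. The only cosmetic difference is that you phrase the averaging step with an auxiliary uniform random permutation $U$ in the random-variable formulation, whereas the paper averages the pushforward measure directly and checks the marginals by comparing measures of $S_N$-orbits.
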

\begin{proof}
As $d_M(\cdot,\cdot)\geq d_\pi(\cdot,\cdot)$, it follows that $d_P(\mu_1,\mu_2;d_M)\geq d_P(\mu_1,\mu_2;d_\pi)$.  

The converse follows from the following lemma.
\begin{lemma}
Given measures $\mu_1$ and $\mu_2$ on $M(N)$ that are invariant under the action of $S(N)$ and a measure $\nu$ on $M(N)\times M(N)$ with marginals $\mu_i$, there is a measure $\nu'$ with marginals $\mu_i$ so that $\Delta(\nu';d_\pi)=\Delta(\nu;d_M)$. 
\end{lemma}
\begin{proof}
Given matrices $A$ and $B$, there is an element $\sigma\in S_N$ so that we have $d_M(A,\sigma B)=d_\pi(A,B)$. Let $\psi:M(N)\times M(N)\to M(N)\times M(N)$ be a measurable function that associates to $(A,B)$ a pair $(A,\sigma B)$ so that $d(A,\sigma B)=d_\pi(A,B)$. We define $\nu''=\psi_*(\nu)$, i.e., the pushforward of $\nu$ under the map $\psi$, and let $\nu'$ be obtained from $\nu''$ by averaging with respect to the diagonal action of $S_N$. 

By construction, if $\mu_i'$ and $\mu_i''$ are the marginals of $\nu'$ and $\nu''$, then for an orbit $S_N A$ of a matrix $A$ we have $\mu_i(S_N A)=\mu''_i(S_N A)=\mu'_i(S_N A)$ (as the constructions of $\nu''$ from $\nu$ and $\nu'$ from $\nu''$ leave the marginal measure of an orbit unchanged). Further, by construction the marginals $\mu'_i$ are invariant under the action of $S(N)$; by hypothesis, the measures $\mu_i$ are also invariant under this action. It follows that $\mu_i'=\mu_i$, so $\mu_i$ are the marginals of $\nu'$. By construction $\Delta(\nu';d_\pi)=\Delta(\nu;d_M)$. 
\end{proof}

Now, let $\nu$ be a measure on $M(N)\times M(N)$ with marginals $\mu_i$ satisfying  $$\Delta(\nu;d_\pi)<d_P(\mu_1,\mu_2;d_\pi)+\epsilon.$$ 
By the above lemma there is a measure $\nu'$ with marginals $\mu_i$ so that we have $\Delta(\nu';d_M)=\Delta(\nu;d_\pi)$. This implies that 

$$d_P(\mu_1,\mu_2;d_M)<d_P(\mu_1,\mu_2;d_\pi)+\epsilon.$$ 
As $\epsilon>0$ is arbitrary, the claim follows.
\end{proof}
 
\section{Finite metric spaces and Distance matrices}

A \emph{distance matrix} is a real symmetric matrix $A=(a_{ij})$ with non-negative entries and zeroes on the diagonal which satisfies the triangle inequality $$a_{ij}\leq a_{ik}+a_{kj}$$ for all $i$, $j$ and $k$. Let $\mathbb{D}(N)$ be the subset of $M(N)$ consisting of distance matrices. We shall consider the space $\D(N)$ with metrics obtained by restricting $d_\pi$ and $d_M$. 

Let $X=\{x_1,\dots,x_n\}$ be a finite set with a pseudo-metric $d$. We can associate to $X$ a distance matrix $A=(a_{ij})$, with $a_{ij}=d(x_i,x_j)$. 

Conversely, there is a natural map $\Theta$ from $\D(N)$ to pseudo-metric spaces  carrying  Borel probability measures. Namely, we associate to $A=(a_{ij})$ the space $\Theta(A)$ with points $x_i$, $1\leq i\leq N$, corresponding to the rows (equivalently the columns) of $A$ and the distance given by $d(x_i,x_j)=a_{ij}$. The measure of each singleton set $\{x_i\}$ is defined to be $1/N$. This construction gives a pseudo-metric and measure. We remark that we can get a metric space by identifying points whose distance is zero and pushing forward the measure under the corresponding quotient map. 

We show that $\Theta$ is a bi-Lipschitz map, with Lipschitz constant $2$.

\begin{theorem}\label{finspc}
For $A,B\in\D(N)$, we have 
$$d_{GHP}(\Theta(A),\Theta(B))\leq d_\pi(A,B)\leq 2d_{GHP}(\Theta(A),\Theta(B)).$$
\end{theorem}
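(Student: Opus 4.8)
The plan is to prove the two inequalities separately, both by explicitly constructing the ambient space and coupling measure that the definitions of $d_\pi$ and $d_{GHP}$ call for.

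For the left inequality $d_{GHP}(\Theta(A),\Theta(B)) \le d_\pi(A,B)$, fix $\rho > d_\pi(A,B)$. By definition of $d_\pi$ there is a permutation $\pi \in S_N$ and a set $\lambda \subset \{1,\dots,N\}$ with $|\lambda| < N\rho$ such that $|a_{ij} - b_{\pi(i)\pi(j)}| < \rho$ for all $i,j \notin \lambda$. Relabel $B$ by $\pi$ so we may assume $\pi = \mathrm{id}$. Now I would build $Z = \Theta(A) \cup \Theta(B)$ by declaring, for the point $x_i$ of $\Theta(A)$ and the point $y_j$ of $\Theta(B)$, the distance $d_Z(x_i,y_j) = \inf_k \big( a_{ik} + \tfrac{\rho}{2}\mathbf{1}_{k\notin\lambda} + \text{(large constant)}\mathbf{1}_{k\in\lambda} + b_{kj} \big)$ — more cleanly, glue $x_i$ to $y_i$ at distance $\rho/2$ for each $i\notin\lambda$ and take the maximal metric extending $d_A$ and $d_B$ subject to those constraints (exactly the construction used in Proposition~\ref{triang} and Theorem~\ref{cpct}); one checks the explicit $d'$ satisfies the triangle inequality and restricts correctly on each copy. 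The coupling $\theta$ on $Z\times Z$ is the pushforward of the uniform measure on $\{1,\dots,N\}$ under $i \mapsto (x_i, y_i)$. For $i \notin \lambda$ we have $d_Z(x_i,y_i) = \rho/2 < \rho$, and $\theta(\{d_Z \le \rho/2\}) \ge (N - |\lambda|)/N > 1 - \rho$, so $\Delta(\theta) \le \rho$, giving $d_{GHP}(\Theta(A),\Theta(B)) \le \rho$. Let $\rho \downarrow d_\pi(A,B)$.

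For the right inequality $d_\pi(A,B) \le 2\, d_{GHP}(\Theta(A),\Theta(B))$, fix $r > d_{GHP}(\Theta(A),\Theta(B))$; there is an ambient $Z$ and a coupling $\theta$ with $\Delta(\theta;d_Z) < r$, i.e.\ $\theta(\{d_Z(z_1,z_2) \le r\}) > 1 - r$. Since both marginals are uniform on $N$ atoms, I want to replace $\theta$ by an actual bijection. The key observation is that the uniform measures on $\Theta(A)$ and $\Theta(B)$ each put mass exactly $1/N$ on every atom, so by an extreme-point / Birkhoff--von~Neumann argument the coupling can be taken to be (an average of) permutation matrices scaled by $1/N$, and at least one permutation $\pi$ in that average satisfies $d_Z(x_i, y_{\pi(i)}) \le r$ for at least $(1-r)N$ indices $i$ — here I would either invoke that the minimum over the support is at most the average, or directly discretize: the set $S$ of pairs $(i,j)$ with $d_Z(x_i,y_j)\le r$ supports a sub-probability measure of mass $>1-r$ with both marginals bounded by $1/N$, and Hall's theorem / a defect version produces a partial matching of size $>(1-r)N$ inside $S$, which extends to a permutation $\pi$. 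Let $\lambda$ be the set of $i$ not matched within $S$, so $|\lambda| < rN \le 2rN$. For $i,j \notin \lambda$, the triangle inequality in $Z$ gives $|a_{ij} - b_{\pi(i)\pi(j)}| = |d_Z(x_i,x_j) - d_Z(y_{\pi(i)},y_{\pi(j)})| \le d_Z(x_i,y_{\pi(i)}) + d_Z(x_j,y_{\pi(j)}) \le 2r$. Hence $d_M(A, \pi B) \le 2r$ and $d_\pi(A,B) \le 2r$; let $r \downarrow d_{GHP}$.

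The main obstacle is the discretization step in the second inequality: turning an arbitrary probability coupling $\theta$ on $Z\times Z$ (which need not be supported on graphs of bijections, and a priori lives on a complicated ambient $Z$) into a genuine permutation of $\{1,\dots,N\}$ with controlled defect. The clean way is to note that the transportation polytope of couplings between two uniform-on-$N$-atoms measures has vertices exactly the scaled permutation matrices (Birkhoff), so $\theta$ is a convex combination $\sum_\pi c_\pi \cdot \tfrac{1}{N}P_\pi$; then $\sum_\pi c_\pi \cdot \tfrac{1}{N}|\{i : d_Z(x_i,y_{\pi(i)}) \le r\}| = \theta(\{d_Z\le r\}) > 1-r$, so some $\pi$ in the combination has $|\{i : d_Z(x_i,y_{\pi(i)}) \le r\}| > (1-r)N$, which is exactly what the argument above needs. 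Everything else — the maximal-metric gluing, the triangle-inequality estimate, and taking infima — is routine and parallels constructions already carried out in Proposition~\ref{triang} and Theorem~\ref{cpct}.
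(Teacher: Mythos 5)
Your proof is correct and follows essentially the same route as the paper: the first inequality via the maximal-metric gluing along matched indices with the diagonal coupling, and the second via the Birkhoff--von Neumann decomposition of the (scaled) coupling matrix plus an averaging argument, which is precisely the paper's ``alternate proof'' of that inequality (the paper's primary proof instead uses a maximal partial matching with an augmenting-chain argument, close to the Hall's-theorem variant you mention in passing).
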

\begin{proof}
Suppose $A=(a_{ij})$ and $B=(b_{ij})$ are distance matrices in $D(N)$.  The spaces $X=\Theta(A)$ and $Y=\Theta(B)$ have $N$ points, which we denote $x_1$, $x_2$, \dots, $x_N$ and $y_1$, $y_2$,\dots, $y_N$ with $d_X(x_i,x_j)=a_{ij}$ and $d_Y(y_i,y_j)=b_{ij}$. The measures on the spaces $X$ and $Y$ assign a measure of $1/N$ to each point.

Suppose $d_\pi(A,B)<\epsilon$. By permuting the rows and columns of $A$ and $B$, we can assume that $\vert a_{ij}-b_{ij}\vert<\epsilon$ if $1\leq i,j<N(1-\epsilon)$. Let $Z=X\coprod Y$ with metric $d_Z$ the maximal metric so that 
\begin{enumerate}
\item $d_Z(x_i,x_j)\leq d_X(x_i,x_j)$ for $1\leq i,j\leq N$.
\item $d_Z(y_i,y_j)\leq d_Y(y_i,y_j)$ for $1\leq i,j\leq N$.
\item $d_Z(x_i,y_i)\leq \epsilon$ for $1\leq i,j\leq N(1-\epsilon)$.
\end{enumerate}
As in the analogous construction in Theorem~\ref{cpct}, we see that $X$ and $Y$ isometrically embed in $Z$. Further, by considering the measure on $Z\times Z$ which assigns the weight $1/N$ to points of the form $(x_i,y_i)$, $1\leq i\leq N$, and zero to all other points, it follows that $d_{GHP}(X,Y)<\epsilon$.

Conversely, suppose, for some $\epsilon>0$,  $d_{GHP}(X,Y)<\epsilon$. It follows that there is a space $Z$ with subspaces that can be identified with $X$ and $Y$ so that $Z=X\cup Y$ and the pushforward measures $\mu_X$ and $\mu_Y$ satisfy $d_P(\mu_X,\mu_Y)<\epsilon$. Let $\nu$ be a measure on $Z\times Z$ with marginals $\mu_X$ and $\mu_Y$ so that $\Delta(\nu)<\epsilon$. We shall consider a bijection between the points of $X$ and $Y$, which will give the necessary permutation of the rows and columns of $A$ and $B$. 

We now prove the existence of the desired bijection. A second proof  is given later.

Let $X_0=\{x\in X:\exists y\in Y\text{\ such\ that\ } d(x,y)<\epsilon\}$. Then there exist functions $f:X_0\to Y$ so that $d(x, f(x))<\epsilon$ for all $x\in X_0$. Choose such a function so that the cardinality (equivalently the measure) of $f(X_0)$ is maximised. 

\begin{lemma}
For $f$ maximal as above, $\mu_Y(Y\sm f(X_0))<N\epsilon$. 
\end{lemma}
\begin{proof}
We define subsets $X_1\subset X$ and $Y_1\subset Y$. Namely, for $n\geq 0$, we define a \emph{permissible chain} to be a sequence of elements of the form  $y_0$, $x_1$, $y_1$, \dots, $x_n$, $y_n$ so that
\begin{enumerate}
\item $y_0\in Y\sm f(X_0)$,
\item\label{close} $d(x_i,y_{i-1})<\epsilon$, $1\leq i\leq n$,
\item\label{image} $y_i=f(x_i)$, $1\leq i\leq n$.
\end{enumerate} 

We also allow $n=0$ for permissible chains. Observe that condition~(\ref{close}) implies that $x_i\in X_0$, so $f(x_i)$ is defined and condition~(\ref{image}) makes sense.

Let $X_1\subset X$ and $Y_1\subset Y$ be the sets of elements of the form $x_n$ and $y_n$, respectively, for permissible chains $y_0$, $x_1$, $y_1$, \dots, $x_n$, $y_n$. Note that in particular $Y\sm f(X_0)\subset Y_1$ (by considering chains with $n=0$).
 
Note that if $y\in Y_1$ and $x\in X$ satisfies $d(x,y)<\epsilon$, then $x\in X_1$. In particular, as $Y\sm f(X_0)\subset Y_1$, we have
\begin{equation}\label{seteq}
Y\sm f(X_0)=Y_1\sm f(X_0)=Y_1\sm f(X_1).
\end{equation}

We claim that $f$ is injective on $X_1$. For, if $f(p)=f(q)$ with $p\in X_1$, then there is a permissible chain $y_0$, $x_1$, $y_1$, \dots, $x_n$, with $p=x_n$. Without loss of generality, we can assume that $q\neq x_i$ for $1\leq i\leq n$. We define a function $g:X_0\to Y$ by $g(x_i)=y_{i-1}$ for $1\leq i\leq n$ and $g(x)=f(x)$ if $x\notin\{x_1,\dots,x_n\}$. Observe that we have $d(x,g(x))<\epsilon$ for all $x\in X_0$ and $g(X_1)=f(X_1)\cup \{y_0\}$ (as $f(p)=f(q)=g(q)$ is in the image of $g$). This contradicts the maximality of the image of $f$. Thus, $f$ must be injective on $X_1$.

As $f:X_1\to Y_1$ is injective and hence measure preserving, $\mu_Y(f(X_1))=\mu_X(X_1),$
hence
\begin{equation}\label{Y1split}
\mu_Y(Y_1)=\mu_X(X_1)+\mu_Y(Y_1\sm f(X_1)).
\end{equation}
Further, we have,
\begin{equation}\label{Y1nu}
\mu_Y(Y_1)=\nu(X\times Y_1)=\nu(X_1\times Y_1)+\nu((X\sm X_1)\times Y_1).
\end{equation}
For each pair $(x,y)\in (X\sm X_1)\times Y_1$, $d(x,y)\geq \epsilon$. It follows that $\nu(X\sm X_1)\times Y_1<\epsilon$. Further, $\nu(X_1\times Y_1)\leq\mu_X(X_1)$. Substituting these estimates in Equation~\ref{Y1nu} and using Equation~\ref{Y1split}, we obtain
\begin{equation}
\mu_X(X_1)+\mu_Y(Y_1\sm f(X_1))=\mu_Y(Y_1)\leq \mu_X(X_1)+\epsilon.
\end{equation}
Finally, using Equation~\ref{seteq}, we obtain  
\begin{equation}
\mu(Y\sm f(X_0))=\mu(Y_1\sm f(X_1))<\epsilon.
\end{equation}
as desired.
\end{proof}

It follows that $f:X_0\to Y$ has image with complement having at most $N\epsilon$ points. We can replace $X_0$ by a subset to make $f$ injective without changing its image. Extending this arbitrarily gives a bijection between points of $X$ and $Y$ so that the distances between $x$ and $f(x)$ is at most $\epsilon$ for at least $N(1-\epsilon)$ points in $X$. By applying permutations, we can assume that $d(X_i,Y_i)<\epsilon$ for $1\leq i\leq N(1-\epsilon)$. By the triangle inequality we deduce that $\vert a_{ij}-b_{ij}\vert <2\epsilon$ for $1\leq i\leq N(1-\epsilon)$. 

Thus, $d_\pi(A,B)<2\epsilon$. As this holds whenever $d_{GHP}(X,Y)<\epsilon$, we see that
$$d_\pi(A,B)\leq 2d_{GHP}(\Theta(A),\Theta(B)),$$
as desired.
\end{proof} 

Now we sketch the promised second proof.
\begin{proof}[Alternate proof of the second inequality in Theorem~\ref{finspc}] 
%\begin{remark}\label{rem:hallsmarriageproof} 
 Suppose $d_{GHP}(X,Y)<\epsilon$ and as before consider a probability measure $\nu$ on $Z\times Z$ with $\Delta(\nu)<\epsilon$ and having marginals $\mu_{X}$ and $\mu_{Y}$.  Let $\nu_{i,j}=\nu\{(x_{i},y_{j})\}$.

The images of  $X$ and $Y$ in $Z$ may be assumed to have $N$ distinct points each, so that each of $\mu_X$ and $\mu_{Y}$ give mass $1/N$ to $N$ distinct points. Then the matrix $\left(N\nu_{i,j}\right)_{i,j\le N}$ is a doubly stochastic matrix. By Birkoff's theorem, it can be written as a convex combination  $\sum c_{\sigma}\sigma$ of permutation matrices.  Hence,
$$
\epsilon>\Delta(\nu) = \sum_{i,j}\nu_{i,j}{\bf 1}_{d(x_{i},y_{j})>\epsilon} = \sum_{\sigma}c_{\sigma} \frac{1}{N}\sum_{i,j}\sigma_{i,j}{\bf 1}_{d(x_{i},y_{j})>\epsilon}.
$$
As $c_{\sigma}$ are non-negative and sum to one, it follows that there exists a permutation $\sigma$ such that 
$$
\epsilon >\frac{1}{N}\sum_{i,j}\sigma_{i,j}{\bf 1}_{d(x_{i},y_{j})>\epsilon} = \frac{1}{N}\sum_{i=1}^{N}{\bf 1}_{d(x_{i},y_{\sigma(i)})>\epsilon}.
 $$ 
Omit all $i$ such that $d(x_{i},y_{\sigma(i)})<\epsilon$. If $i,j$ are among the remaining $N(1-\epsilon)$ indices, then  $|a_{i,j}-b_{i,j}|\le 2\epsilon$ by the  triangle inequality. Thus, $d_{\pi}(A,B)\le 2\epsilon$ as required to show.
\end{proof}

\vspace{2mm}
We remark that it is not true that if two finite metric spaces are close in the Gromov-Hausdorff distance, then their distance matrices are close. For example, for $\epsilon>0$ small consider the subspaces of $\R$ given by
$$X=\{-\epsilon,0,\epsilon,1\},$$
and $$Y=\{0,\epsilon,1,1+\epsilon).$$
As the Hausdorff distance between these, as subsets of $\R$, is $\epsilon$, it follows that $d_{GH}(X,Y)\leq\epsilon$. On the other hand, for $\epsilon$ small, the distance between the corresponding distance matrices is at least $1/4$, as if we exclude less than $1/4$th of the rows, i.e., no row, then the distance between some pair of corresponding entries of the two matrices is greater than $1/2$.

It would be interesting to understand the relation between Gromov-Hausdorff and Gromov-Hausdorff-Prokhorov  convergence for Riemannian manifolds with the normalised volume measure, especially in the case of \emph{collapsing} with bounded sectional curvatures. 

\section{Limits of Samples}

Let $(X,d,\mu)$ be a compact, metric measure space. In this section we show that the empirical space from a sufficiently large sample is close to $X$ in the Gromov-Hausdorff-Prokhorov  metric. 

\begin{definition}\label{empspc}
Suppose $x_1$, $x_2$, \dots, $x_N$ are $N$ points in $X$, the \emph{empirical space} is the pseudo-metric measure space consisting of these points with distance induced from $X$ and measure associating equal weight to each sample point.
\end{definition}

Let $\epsilon>0$ be fixed. As $X$ is compact, there are finitely many points $a_1$,\dots,$a_n$ in $X$ so that each $x\in X$ satisfies $d(x,a_i)<\epsilon$ for some $a_i$. Further, we can partition $X$ into disjoint measurable sets $A_i$, $1\leq i\leq n$, so that, if $x\in A_i$, then $d(x,a_i)<\epsilon$. Let $\psi:X\to \{a_1,\dots,a_n\}$ be the function that maps each $x\in A_i$ to $a_i$.

Consider the metric measure space $\widehat{X}$ consisting of the points $\{a_i\}$ with distances induced from $X$, and with measure $\widehat{\mu}$ given by assigning weight $\mu(A_i)$ to the point $a_i$. Note that $\widehat{\mu}$ can also be regarded as a measure on $X$.

\begin{lemma}\label{hat}
We have $d_{GHP}(\widehat{X},X)\leq \epsilon$.
\end{lemma}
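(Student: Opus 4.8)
The plan is to realise both $\widehat{X}$ and $X$ inside a single ambient metric space and to exhibit an explicit coupling of their measures with small $\Delta$. The natural choice of ambient space is $X$ itself: the identity map $X\to X$ is an isometric embedding, and since the metric on $\widehat{X}$ is by definition the one induced from $X$, the inclusion $\{a_1,\dots,a_n\}\hookrightarrow X$ is also an isometric embedding. As $X$ is compact, it is an admissible choice of $Z$ in the definition of $d_{GHP}$, so $d_{GHP}(\widehat{X},X)\leq d_P(\widehat{\mu},\mu;X)$, and it suffices to bound the latter by $\epsilon$.

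To bound $d_P(\widehat{\mu},\mu;X)$ I would use the coupling $\theta=(\mathrm{id}_X,\psi)_*\mu$ on $X\times X$, i.e.\ the pushforward of $\mu$ under the map $x\mapsto(x,\psi(x))$; this is a Borel probability measure because $\psi$ is measurable, the sets $A_i$ being measurable. Its first marginal is $\mu$ by construction, and its second marginal assigns to each $a_i$ the mass $\mu(\psi^{-1}(a_i))=\mu(A_i)=\widehat{\mu}(a_i)$, hence equals $\widehat{\mu}$. Thus $\theta$ is an admissible coupling in the definition of $d_P(\widehat{\mu},\mu;X)$.

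It then remains to estimate $\Delta(\theta)$. By the choice of the partition $\{A_i\}$, every $x\in A_i$ satisfies $d(x,a_i)=d(x,\psi(x))<\epsilon$, so $\theta$ is supported on the set $\{(z_1,z_2)\in X\times X: d(z_1,z_2)\leq\epsilon\}$; in particular $\theta(\{d\leq\epsilon\})=1\geq 1-\epsilon$, which gives $\Delta(\theta)\leq\epsilon$. Combining this with the previous paragraph yields $d_P(\widehat{\mu},\mu;X)\leq\epsilon$ and therefore $d_{GHP}(\widehat{X},X)\leq\epsilon$.

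There is essentially no serious obstacle here; the only points requiring (routine) care are that the inclusion $\widehat{X}\hookrightarrow X$ really is an isometric embedding — immediate from the definition of the metric on $\widehat{X}$ as the restriction of $d$ — and that $\psi$ is measurable so that $\theta$ is a legitimate Borel probability measure with the claimed marginals.
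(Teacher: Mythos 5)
Your argument is correct and is essentially the paper's own proof: taking $Z=X$ and coupling $\mu$ with $\widehat{\mu}$ via the pushforward of $\mu$ under $x\mapsto(x,\psi(x))$ is exactly the measure $\nu$ supported on $\bigcup_i \{a_i\}\times A_i$ used in the paper, up to swapping the two factors. The estimate $\Delta\leq\epsilon$ and the identification of the marginals proceed identically.
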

\begin{proof}
Let $Z=X$ and the maps from $\widehat{X}$ and $X$ to $Z$ be the inclusion and the identity. We estimate the distance between the pushforward measures, which we identify with $\widehat{\mu}$ and $\mu$.

Namely, we consider the measure $\nu$ on $X\times X$ supported on the union of sets of the form $\{a_i\}\times A_i$ and so that, for $S\subset A_i$,  $\nu(\{a_i\}\times S)=\mu(S)$. Then this has marginals $\widehat{\mu}$ and $\mu$ and satisfies $\Delta(\nu)\leq \epsilon$.
\end{proof}

Now consider a sample of $N$ points from $X$ chosen independently according to the measure $\mu$. Let $X_N$ with measure $\mu_N$ be the corresponding empirical space. Let $\widehat{X}_N$ be the metric measure space with underlying space $\widehat{X}$ and measure $\psi_*(\mu_N)$.

\begin{lemma}\label{NN}
We have $d_{GHP}(\widehat{X}_N,X_N)\leq \epsilon$.
\end{lemma}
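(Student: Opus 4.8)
The statement is the ``empirical'' analogue of Lemma~\ref{hat}, and I would prove it by mimicking that argument almost verbatim, the only difference being that the measure $\psi_*(\mu_N)$ is now a random (sample-dependent) measure while the bound $\epsilon$ is deterministic. The key observation is that both $\widehat{X}_N$ and $X_N$ have $X$ itself as an ambient space into which they embed isometrically: $X_N$ is by definition the subset $\{x_1,\dots,x_N\}$ with the metric induced from $X$, and $\widehat{X}_N$ has underlying space $\{a_1,\dots,a_n\}\subset X$ again with the induced metric. So I would take $Z=X$ with the inclusion maps $\{a_i\}\hookrightarrow X$ and $\{x_k\}\hookrightarrow X$; these are tautologically isometric embeddings, and the pushforwards of the two measures under them are just $\psi_*(\mu_N)$ and $\mu_N$, regarded as Borel measures on $X$. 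It then suffices to exhibit a coupling $\nu$ of $\psi_*(\mu_N)$ and $\mu_N$ on $X\times X$ with $\Delta(\nu)\le \epsilon$.

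\textbf{The coupling.} I would take $\nu$ to be the pushforward of $\mu_N$ under the map $x\mapsto(\psi(x),x)$, i.e.\ the measure on $X\times X$ assigning weight $1/N$ to each pair $(\psi(x_k),x_k)$, $1\le k\le N$ (with multiplicities if sample points repeat). Since $\psi$ is measurable this is a legitimate Borel probability measure. Its first marginal is the pushforward of $\mu_N$ under $\psi$, which is exactly $\psi_*(\mu_N)$, the measure on $\widehat{X}$; its second marginal is $\mu_N$. Finally, every pair in the support of $\nu$ has the form $(\psi(x_k),x_k)$ with $x_k\in A_{i}$ and $\psi(x_k)=a_i$, so $d_X(\psi(x_k),x_k)=d_X(a_i,x_k)<\epsilon$ by the defining property of the partition $\{A_i\}$. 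Hence $\nu\bigl(\{(z_1,z_2):d_X(z_1,z_2)\le\epsilon\}\bigr)=1\ge 1-\epsilon$, so $\Delta(\nu)\le\epsilon$, and therefore $d_P(\psi_*(\mu_N),\mu_N;X)\le\epsilon$, giving $d_{GHP}(\widehat{X}_N,X_N)\le\epsilon$.

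\textbf{On the difficulty.} There is essentially no obstacle here: the construction is identical in spirit to Lemma~\ref{hat}, with $\mu$ replaced by the empirical measure $\mu_N$ and $\widehat\mu$ by $\psi_*(\mu_N)$, and the bound holds pathwise for every realization of the sample $x_1,\dots,x_N$ (it does not involve any probabilistic estimate). The one point worth stating carefully is that $\widehat{X}_N$ and $X_N$ are pseudo-metric measure spaces sitting inside the genuine metric space $X$, so the required isometric embeddings into $Z=X$ are immediate and no approximation or passage to subsequences (as in Theorem~\ref{cpct}) is needed. I would therefore present the proof in two or three lines, exactly paralleling Lemma~\ref{hat}, and simply flag that $Z=X$ with the obvious inclusions does the job.
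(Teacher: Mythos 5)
Your proof is correct and is essentially the paper's own argument: both take $Z=X$ with the inclusion maps and couple $\psi_*(\mu_N)$ with $\mu_N$ via the pushforward of $\mu_N$ under $x\mapsto(\psi(x),x)$, which is supported within the $\epsilon$-neighbourhood of the diagonal. Your added remarks (that the bound is pathwise and that the embeddings are tautological) are accurate but not a departure from the paper's route.
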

\begin{proof}
Let $Z=X$ and the maps from the spaces to $X$ be the inclusion maps. We estimate the distance between the pushforward measures, which are the measures $\psi_*(\mu_N)$ and $\mu_N$ regarded as measures on $X$. Namely, we define a measure $\nu$ on $X\times X$ with support points of the form $(a_i,b)$, $b\in A_i\cap X_N$, with such a point having weight $\mu_N(b)$. This then has the appropriate marginals (as each point $b$ is in a unique set $A_i$, $\pi_{2*}(\nu)=\mu_N$) and satisfies $\Delta(\nu)\leq\epsilon$.
\end{proof}

We are thus reduced to comparing two distributions on a finite metric space, namely $\widehat{X}$. Let $S=\{a_1,\dots,a_n\}$ be a finite metric space. Consider two probability mass functions $\{p_i\}$ and $\{q_i\}$ on $S$.

\begin{lemma}\label{samespc}
$d_{GHP}((S,\{p_i\}),(S,\{q_i\}))\leq 1-\sum\limits_{i=1}^n \min(p_i,q_i)\leq \sum\limits_{i=1}^n \vert p_i-q_i\vert$.
\end{lemma}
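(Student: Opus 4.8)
The plan is to construct, on the same finite space $S$, an explicit coupling $\nu$ of the two probability mass functions $\{p_i\}$ and $\{q_i\}$ whose ``mass on the diagonal'' is as large as possible, and then read off the bound on $\Delta(\nu)$ directly. Since we are taking $Z = S$ with both maps the identity, $d_{GHP}((S,\{p_i\}),(S,\{q_i\})) \leq d_P(\{p_i\},\{q_i\};S) \leq \Delta(\nu)$ for any such coupling, so it suffices to exhibit one good $\nu$.

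First I would define the diagonal part of the coupling by putting mass $\min(p_i,q_i)$ at the point $(a_i,a_i)$ for each $i$; this is legitimate since $\min(p_i,q_i) \leq p_i$ and $\leq q_i$. The total mass so placed is $m := \sum_i \min(p_i,q_i)$, leaving a ``defect'' of $1 - m$ in each marginal, with the residual first marginal supported on $\{i : p_i > q_i\}$ (mass $p_i - q_i$ there) and the residual second marginal on $\{i : q_i > p_i\}$ (mass $q_i - p_i$ there). I would then distribute this remaining mass $1-m$ arbitrarily off the diagonal so as to match these two residual marginals — any coupling of the two residual measures works, e.g. a product-type assignment $\nu(\{(a_i,a_j)\}) = (p_i-q_i)^+(q_j-p_j)^+/(1-m)$ when $1-m > 0$. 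By construction $\nu$ has the correct marginals $\{p_i\}$ and $\{q_i\}$.

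Now I estimate $\Delta(\nu)$. Set $r = 1-m$. On the diagonal, $d_S(a_i,a_i) = 0 \leq r$, and the diagonal carries mass exactly $m = 1-r$. Hence $\nu(\{(z_1,z_2) : d_S(z_1,z_2) \leq r\}) \geq m = 1 - r$, which by definition gives $\Delta(\nu) \leq r = 1 - \sum_i \min(p_i,q_i)$. This yields the first inequality. For the second inequality, I would use the elementary identity $\min(p_i,q_i) = \tfrac12(p_i + q_i - |p_i - q_i|)$; summing over $i$ and using $\sum p_i = \sum q_i = 1$ gives $\sum_i \min(p_i,q_i) = 1 - \tfrac12\sum_i |p_i - q_i|$, so $1 - \sum_i \min(p_i,q_i) = \tfrac12 \sum_i |p_i - q_i| \leq \sum_i |p_i - q_i|$.

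I do not anticipate a serious obstacle here; the only point requiring a little care is the degenerate case $r = 0$ (i.e. $p_i = q_i$ for all $i$), where the off-diagonal part is empty and the estimate is trivial, and the measurability/finiteness bookkeeping, which is immediate since $S$ is finite. The one genuinely substantive idea is recognizing that the optimal-overlap coupling — the maximal coupling of the two measures — makes the diagonal mass equal $\sum_i \min(p_i,q_i)$, after which the definition of $\Delta$ does the rest.
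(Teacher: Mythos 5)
Your proof is correct and follows essentially the same route as the paper: the paper's (much terser) proof also just invokes the maximal coupling placing mass $\min(p_i,q_i)$ on each diagonal point $(a_i,a_i)$ and reads off the bound on $\Delta$, with the second inequality following from $\sum_i p_i=\sum_i q_i=1$. Your write-up merely supplies the explicit off-diagonal residual coupling and the identity $\min(p_i,q_i)=\tfrac12(p_i+q_i-|p_i-q_i|)$, which the paper leaves implicit.
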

\begin{proof}
We can choose a measure on $S\times S$ with marginals $(p_i)$ and $(q_i)$ so that the point $(i,i)$ has weight at least $\min(p_i,q_i)$. This gives the first inequality. The second inequality follows from the first as $\sum_i p_i=1$.
\end{proof}

Finally, we observe that samples are close to the given distribution on $X$.

\begin{lemma}\label{sampconv}
For $N$ sufficiently large, the probability that $d_{GHP}(X_N,X)>3\epsilon$ is less than $\epsilon$. 
\end{lemma}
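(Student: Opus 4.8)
The plan is to chain the three preceding lemmas together with the triangle inequality of Proposition~\ref{triang}, so that the only probabilistic input needed is the concentration of the empirical frequencies $\mu_N(A_i)$ about the true masses $\mu(A_i)$. First I would record the two deterministic bounds already available: Lemma~\ref{hat} gives $d_{GHP}(\widehat X,X)\le\epsilon$, and Lemma~\ref{NN} gives $d_{GHP}(\widehat X_N,X_N)\le\epsilon$ for every realization of the sample $x_1,\dots,x_N$. The remaining term $d_{GHP}(\widehat X_N,\widehat X)$ compares two probability mass functions on the \emph{same} finite metric space $\widehat X=\{a_1,\dots,a_n\}$, namely $\psi_*(\mu_N)(\{a_i\})=\mu_N(A_i)$ and $\widehat\mu(\{a_i\})=\mu(A_i)$, so Lemma~\ref{samespc} gives
$$d_{GHP}(\widehat X_N,\widehat X)\le\sum_{i=1}^n|\mu_N(A_i)-\mu(A_i)|.$$
Combining these three estimates via Proposition~\ref{triang}, on the event $\{\sum_{i=1}^n|\mu_N(A_i)-\mu(A_i)|<\epsilon\}$ we get $d_{GHP}(X_N,X)<3\epsilon$.

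So it remains to show $P\big(\sum_{i=1}^n|\mu_N(A_i)-\mu(A_i)|\ge\epsilon\big)<\epsilon$ once $N$ is large enough. The key observation is that $n=n(\epsilon)$ is fixed before $N\to\infty$: it is the cardinality of the $\epsilon$-net chosen at the start of the section, and does not grow with $N$. For each fixed $i$, $N\mu_N(A_i)=\#\{j\le N: x_j\in A_i\}$ is a sum of $N$ i.i.d.\ Bernoulli$(\mu(A_i))$ variables, so $\mu_N(A_i)\to\mu(A_i)$ almost surely by the strong law of large numbers; equivalently Chebyshev gives $P(|\mu_N(A_i)-\mu(A_i)|\ge\epsilon/n)\le n^2/(4N\epsilon^2)$, and a union bound over the $n$ cells yields $P\big(\sum_i|\mu_N(A_i)-\mu(A_i)|\ge\epsilon\big)\le n^3/(4N\epsilon^2)$. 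Taking $N>n^3/(4\epsilon^3)$ makes this smaller than $\epsilon$, which finishes the proof.

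I do not anticipate a genuine obstacle here: all the geometric work is already done in Lemmas~\ref{hat}, \ref{NN} and \ref{samespc} and in Proposition~\ref{triang}, and the only new ingredient is the classical fact that a multinomial empirical distribution with a fixed number of cells converges to its mean in total variation. The single point to state with care is that the threshold for $N$ depends on $\epsilon$ both explicitly and through $n=n(\epsilon)$, but since $\epsilon$ is held fixed throughout this is harmless. If one wanted an explicit rate, Hoeffding's inequality in place of Chebyshev would give exponential-in-$N$ decay, but that is not needed for the qualitative statement.
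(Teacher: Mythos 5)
Your proposal is correct and follows essentially the same route as the paper: both decompose $d_{GHP}(X_N,X)$ via the triangle inequality into the three pieces controlled by Lemmas~\ref{hat}, \ref{NN} and \ref{samespc}, and both reduce the probabilistic content to concentration of the empirical cell frequencies $\mu_N(A_i)$ about $\mu(A_i)$ with a union bound over the fixed number $n=n(\epsilon)$ of cells. The only difference is that you make the law-of-large-numbers step quantitative via Chebyshev, which the paper leaves implicit.
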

\begin{proof}
By the law of large numbers, if $p_i=\mu(A_i)$, then  given a sample $X_N$ with $N$ points, $q_i=\mu_N(A_i)$ converges to $p_i$ as $N\to\infty$. Hence, for $N$ sufficiently large, $P(\vert q_i-p_i\vert>\epsilon/n)<\epsilon/n$. By Lemma~\ref{samespc}, 
\begin{equation}\label{hathat}
P(d_{GHP}(\widehat{X}_N,\widehat{X})>\epsilon)<\epsilon
\end{equation}
Lemmas~\ref{hat} and ~\ref{NN}, Equation~\ref{hathat} and the triangle inequality show that
$$P(d_{GHP}(X_N,X)>3\epsilon)<\epsilon$$
as claimed.
\end{proof}

\section{The Asymptotically Lipschitz correspondence}

We can now show that the correspondence between metric measure spaces and distributions on distance matrices is asymptotically bi-Lipschitz. Let $(X,d,\mu)$ be a metric measure space. For $N\in\N$, let $x_1$, $x_2$,\dots, $x_N$ be a sequence of points in $X$ sampled independently according to $\mu$. We associate to the random sample $x_1$, $x_2$,\dots, $x_N$  the distance matrix $M_N^X$. We denote the distribution of $M_N^X$ by $\theta_N^X$. We thus have  
a sequence of distributions $\theta_N^X$ on the metric spaces $\D(N)$ associated to $X$.

Consider a pair of metric measure spaces $X$ and $Y$ and the corresponding sequence of probability distributions $\theta_N^X$ and $\theta_N^Y$. In all the statements in this and the next section, Proposition~\ref{gpaction} shows that we can replace $d_P(\cdot,\cdot;d_\pi)$ by $d_P(\cdot,\cdot;d_M)$.

\begin{theorem}
For metric measure spaces $X$ and $Y$, we have
\begin{enumerate}
\item $\limsup d_P(\theta_N^X,\theta_N^Y;d_\pi)\leq 2d_{GHP}(X,Y)$.
\item If  $\liminf d_P(\theta_N^X,\theta_N^Y;d_\pi)<\epsilon<1$, then $d_{GHP}(X,Y)<\epsilon$.
\end{enumerate}
\end{theorem}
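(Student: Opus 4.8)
The plan is to prove the two assertions separately, in each case using a coupling argument that transfers one notion of closeness to the other through a triangle inequality, together with the law of large numbers and the results already established about the map $\Theta$ and about samples.

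For part (1), write $D=d_{GHP}(X,Y)$ and fix $\epsilon>0$. First I would use the definition of $d_{GHP}$ to realise $X$ and $Y$ inside a common metric space $Z=X\cup Y$ carrying a coupling $\nu$ of (the pushforwards of) the measures on $X$ and $Y$ with $\Delta(\nu;d_Z)<D+\epsilon$. The key step is then to sample $N$ independent pairs $(p_i,q_i)$ from $\nu$: the tuple $(p_1,\dots,p_N)$ is an independent sample from $X$ and $(q_1,\dots,q_N)$ one from $Y$, so the distance matrices $A=(d_Z(p_i,p_j))$ and $B=(d_Z(q_i,q_j))$ furnish a coupling of $\theta_N^X$ and $\theta_N^Y$. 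By the law of large numbers, with probability tending to $1$ the set $\lambda=\{i:d_Z(p_i,q_i)>D+\epsilon\}$ satisfies $|\lambda|<(D+2\epsilon)N$, while for $i,j\notin\lambda$ the triangle inequality in $Z$ gives $|a_{ij}-b_{ij}|\le d_Z(p_i,q_i)+d_Z(p_j,q_j)\le 2(D+\epsilon)$. Taking $\rho=2D+3\epsilon$ then shows $d_M(A,B)\le\rho$ on this high-probability event, so for $N$ large $d_P(\theta_N^X,\theta_N^Y;d_M)\le 2D+3\epsilon$; since $d_\pi\le d_M$ and $\epsilon>0$ is arbitrary, $\limsup d_P(\theta_N^X,\theta_N^Y;d_\pi)\le 2D$.

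For part (2), assume $\liminf d_P(\theta_N^X,\theta_N^Y;d_\pi)<\epsilon<1$ and fix $\epsilon'$ with $\liminf<\epsilon'<\epsilon$, so that $d_P(\theta_N^X,\theta_N^Y;d_\pi)<\epsilon'$ along a subsequence. The plan is to overlay three approximations on a single probability space. Along the subsequence, for $N$ large there is a coupling $\Lambda$ of $\theta_N^X$ and $\theta_N^Y$ with $\Lambda(\{d_\pi(A,B)\le\epsilon'\})\ge 1-\epsilon'$, and on that event Theorem~\ref{finspc} gives $d_{GHP}(\Theta(A),\Theta(B))\le\epsilon'$. Moreover, since the $\Lambda$-marginal of $A$ is $\theta_N^X$, the space $\Theta(A)$ has the distribution of the empirical space $X_N$, so Lemma~\ref{sampconv} gives $\Lambda(\{d_{GHP}(\Theta(A),X)>3\eta\})<\eta$ for any fixed $\eta>0$ (for $N$ large), and symmetrically for $B$ and $Y$. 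Choosing $\eta$ small enough that $\epsilon'+2\eta<1$ — this is exactly where the hypothesis $\epsilon<1$ is used — a union bound shows these three events meet in a set of positive probability; picking a point of the intersection and applying the triangle inequality of Proposition~\ref{triang} gives $d_{GHP}(X,Y)\le 3\eta+\epsilon'+3\eta=\epsilon'+6\eta$. Letting $\eta\to 0$ yields $d_{GHP}(X,Y)\le\epsilon'<\epsilon$.

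The routine parts are the law-of-large-numbers estimate and the bookkeeping with constants in (1). The step demanding the most care is the simultaneous realisation in (2): one must notice that the coupling of the matrix distributions and the sample-approximation bounds of Lemma~\ref{sampconv} can be placed on one probability space, and that the assumption $\epsilon<1$ is precisely what keeps the ensuing union bound non-vacuous; everything else is a direct appeal to Theorem~\ref{finspc}, Lemma~\ref{sampconv}, and the triangle inequality.
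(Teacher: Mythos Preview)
Your argument is correct in both parts. For part~(2) it is essentially the paper's proof, though you are more explicit than the paper about why the three events---closeness of $\Theta(A)$ to $\Theta(B)$, of $\Theta(A)$ to $X$, and of $\Theta(B)$ to $Y$---can be placed on a single probability space (namely, the last two depend only on the marginals of $\Lambda$) and about why the hypothesis $\epsilon<1$ is needed to make the union bound non-vacuous.

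For part~(1) you take a genuinely different route from the paper. The paper argues indirectly through the empirical spaces: it uses Lemma~\ref{sampconv} to get $d_{GHP}(X_N,X)<3\epsilon$ and $d_{GHP}(Y_N,Y)<3\epsilon$ with high probability, applies the triangle inequality to obtain $d_{GHP}(X_N,Y_N)<d_{GHP}(X,Y)+6\epsilon$, and then invokes the inequality $d_\pi(M_N^X,M_N^Y)\le 2d_{GHP}(X_N,Y_N)$ from Theorem~\ref{finspc}. You instead couple the two samples directly via $\nu^N$ and apply the law of large numbers to the Bernoulli variables $\mathbf{1}\{d_Z(p_i,q_i)>D+\epsilon\}$---exactly the construction the paper later uses for Theorem~\ref{smallN}. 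Your approach is more direct and bypasses both Lemma~\ref{sampconv} and Theorem~\ref{finspc} for this half; the paper's has the structural advantage that both directions of the theorem rest symmetrically on the single bi-Lipschitz correspondence $\Theta$ of Theorem~\ref{finspc}.
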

\begin{proof}
Suppose $\epsilon>0$ is arbitrary. For $N>0$ an integer, let $X_N$ and $Y_N$ be empirical spaces determined by choosing $N$  points in each of $X$ and $Y$,  such that all the $2N$ points are chosen independently. Let $M^X_N$ and $M^Y_N$ be the corresponding random distance matrices. Note that these have distributions $\theta_N^X$ and $\theta_N^Y$, respectively.

If $N$ is sufficiently large, by Lemma~\ref{sampconv}, $d_{GHP}(X_N,X)<3\epsilon$ with probability at least $1-\epsilon$ and $d_{GHP}(Y_N,Y)<3\epsilon$ with probability at least $1-\epsilon$. Hence $d_{GHP}(X_N,Y_N)<d_{GHP}(X,Y)+6\epsilon$ with probability at least $1-2\epsilon$. 
By Theorem~\ref{finspc}, $d_\pi(M_N^X,M_N^Y)\leq 2d_{GHP}(X_N,Y_N)$.
It follows that for $N$ sufficiently large
$$P(d_\pi(M^X_N,M^Y_N)<2d_{GHP}(X,Y)+12\epsilon)>1-2\epsilon.$$

As $\epsilon>0$ is arbitrary and $M_N^X$ and $M_N^Y$ have distributions $\theta_N^X$ and $\theta_N^Y$, respectively, it follows that  
$$\limsup_{N\to\infty} d_P(\theta_N^X,\theta_N^Y;d_\pi)\leq 2d_{GHP}(X,Y).$$  

On the other hand, suppose $\liminf d_P(\theta_N^X,\theta_N^Y;d_M)<\epsilon<1$. Let $X_N$, $Y_N$, $M_N^X$ and $M_N^Y$ be as before. Then for infinitely many $N$, $P(d_\pi(M_N^X,M_N^Y)<\epsilon)>1-\epsilon$. By Theorem~\ref{finspc}, $d_{GHP}(X_N,Y_N)\leq d_\pi(M_N^X,M_N^Y)$.
It follows that, for infinitely many $N$,
\begin{equation}\label{psmp}
P(d_{GHP}(X_N,Y_N)<\epsilon)>1-\epsilon.
\end{equation}

Now, let $\delta>0$ be arbitrary and pick $N$ so that Equation~\ref{psmp} holds and $N$ is sufficiently large so that $d(X_N,X)<3\delta$ and $d(Y_N,Y)<3\delta$, with probability at least $1-6\delta$. By the triangle inequality and Equation~\ref{psmp}, it follows that 
$$P(d_{GHP}(X,Y)<\epsilon+6\delta)>1-\epsilon-6\delta.$$
As $\delta>0$ is arbitrary  and $d_{GHP}(X,Y)$ is a constant, it follows that  $d_{GHP}(\epsilon)<\epsilon$ if $\epsilon<1$.

\end{proof}

\section{Uniform bounds for sample distances}

One would expect that if metric measure spaces $X$ and $Y$ are close, then the distributions $\theta^X_N$ and $\theta^Y_N$ are close for all $N$, not just for $N$ large. We do show that this is the case. However, somewhat surprisingly, the correspondence is H\"older-$1/2$, but not H\"older-$\alpha$ for $\alpha>1/2$. In particular, the correspondence is not Lipschitz.

\begin{theorem}\label{smallN}
If $X$ and $Y$ are metric, measure spaces with $d_{GHP}(X,Y)<1/4$, then, for all $N\in\N$,
$$d_P(\theta^X_N,\theta^Y_N;d_\pi)\leq d_{GHP}(X,Y)^{1/2}.$$
\end{theorem}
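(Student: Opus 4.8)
To prove Theorem~\ref{smallN} the plan is to exhibit an explicit coupling of the random distance matrices $M^X_N$ and $M^Y_N$: sample $N$ independent pairs from a near-optimal coupling of $\mu_X$ and $\mu_Y$ on a common metric space, form the two distance matrices index-by-index, and bound the $\Delta$-parameter of this coupling (with respect to $d_\pi$, equivalently $d_M$ by Proposition~\ref{gpaction}) using Markov's inequality.

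In detail, set $\eta=d_{GHP}(X,Y)$ and fix an arbitrary $\eta'\in(\eta,1/4)$, which is possible since $\eta<1/4$. By the definitions of $d_{GHP}$ and of $d_P$ there is a metric space $Z$ containing isometric copies of $X$ and $Y$ and a Borel probability measure $\nu$ on $Z\times Z$ with marginals $\mu_X$ and $\mu_Y$ and $\Delta(\nu;d_Z)<\eta'$; unwinding the definition of $\Delta$ this yields $\nu(\{(z_1,z_2):d_Z(z_1,z_2)>\eta'\})<\eta'$. Let $(X_i,Y_i)_{1\le i\le N}$ be i.i.d.\ with law $\nu$. Since the $X_i$ are i.i.d.\ with law $\mu_X$ and $X\hookrightarrow Z$ is isometric, the matrix $M^X_N:=(d_Z(X_i,X_j))_{i,j\le N}$ has distribution $\theta^X_N$; likewise $M^Y_N:=(d_Z(Y_i,Y_j))_{i,j\le N}$ has distribution $\theta^Y_N$. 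Thus the pushforward of $\nu^{\otimes N}$ under $(X_i,Y_i)_i\mapsto(M^X_N,M^Y_N)$ is a coupling of $\theta^X_N$ and $\theta^Y_N$.

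I would then estimate $d_\pi(M^X_N,M^Y_N)$ on this coupling. Let $\lambda=\{i\le N:d_Z(X_i,Y_i)>\eta'\}$ and $K=|\lambda|$; then $K$ is binomial with mean $\le\eta' N$. For $i,j\notin\lambda$ the triangle inequality gives $|d_Z(X_i,X_j)-d_Z(Y_i,Y_j)|\le 2\eta'$, so by the definition of $d_M$ we have, pointwise, $d_\pi(M^X_N,M^Y_N)\le d_M(M^X_N,M^Y_N)\le\max(2\eta',K/N)$. Now fix $r\ge 2\eta'$. By Markov's inequality $P(K/N>r)\le\eta'/r$, hence $P(d_\pi(M^X_N,M^Y_N)\le r)\ge P(K/N\le r)\ge 1-\eta'/r$, and this is $\ge 1-r$ as soon as $r\ge\sqrt{\eta'}$. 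Since $\eta'<1/4$ we have $2\eta'\le\sqrt{\eta'}$, so $r=\sqrt{\eta'}$ meets both requirements; therefore the $\Delta$-parameter of this coupling with respect to $d_\pi$, and hence $d_P(\theta^X_N,\theta^Y_N;d_\pi)$, is at most $\sqrt{\eta'}$ for every $N$. Letting $\eta'\downarrow\eta$ yields $d_P(\theta^X_N,\theta^Y_N;d_\pi)\le\sqrt{\eta}=d_{GHP}(X,Y)^{1/2}$, uniformly in $N$.

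The one place requiring care — and the reason both for the hypothesis $d_{GHP}(X,Y)<1/4$ and for the exponent being $1/2$ rather than $1$ — is the competition between the two terms in $\max(2\eta',K/N)$: Markov's inequality controls the random term $K/N$ only down to the scale $\sqrt{\eta'}$ (which is essentially sharp for a binomial with mean $\eta'N$, and is what makes Hölder-$1/2$ optimal and rules out Hölder-$\alpha$ for $\alpha>1/2$), while the deterministic term $2\eta'$ must be no larger than $\sqrt{\eta'}$, which holds precisely when $\eta'\le 1/4$. Everything else is a routine unwinding of the definitions of $\Delta$, $d_M$, $d_P$ and $d_{GHP}$.
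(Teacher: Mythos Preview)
Your proof is correct and follows essentially the same approach as the paper: couple the two distance matrices by sampling $N$ i.i.d.\ pairs from a near-optimal coupling $\nu$ on $Z\times Z$, bound $d_\pi(M^X_N,M^Y_N)\le\max(2\eta',K/N)$ with $K$ the number of ``bad'' indices (the paper's $B(\omega)$), and then apply Markov's inequality to the binomial $K$ at level $\sqrt{\eta'}$. The paper presents the same two lemmas (your pointwise bound is its Lemma on $\max(B/N,2\epsilon)$, and your Markov step is its binomial lemma), and both arguments use $\eta'<1/4$ exactly to ensure $2\eta'\le\sqrt{\eta'}$ before letting the parameter decrease to $d_{GHP}(X,Y)$.
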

\begin{proof}
Suppose $d_{GHP}(X,Y)<\epsilon$, consider embeddings from $X$ and $Y$ into a space $Z$ and a measure $\nu$ on $Z\times Z$  with marginals the pushforwards of the distributions of $X$ and $Y$ so that $\Delta(\nu)<\epsilon$. For $N$ sufficently large, let $\Omega$ be the product measure $\nu^N$ on $(Z\times Z)^N$ and consider the random variables associating to the point $\omega=(x_1,y_1,\dots,x_N,y_N)\in (Z\times Z)^N$ the matrices $M_N^X=(d(x_i,x_j))$ and $M_N^Y=(d(y_i,y_j))$. The distributions of $M_N^X$ and $M_N^Y$ are $\theta_N^X$ and $\theta_N^Y$, respectively.

For a point $\omega=(x_1,y_1,\dots,x_N,y_N)$, let $B(\omega)$ be the cardinality of the set $\{1\leq i\leq N: d(x_i,y_i)\geq \epsilon\}$. Then $B$ has a binomial distribution with parameters $N$ and $p$, with $p$ the probability that $d(x_1,y_1)>\epsilon$. Note that $p\leq \epsilon$.

\begin{lemma}\label{cases}
We have $d_\pi(M_N^X(\omega),M_N^Y(\omega))\leq \max(B(\omega)/N,2\epsilon)$.
\end{lemma}
\begin{proof}
After permutations we can assume that if $d(x_i(\omega),y_i(\omega))\geq \epsilon$, then $i\geq N-B(\omega)$.
The triangle inequality then shows that, for $1\leq i,j\leq N(1-B(\omega)/N)$,
$$\vert d(x_i(\omega),x_j(\omega))-d(y_i(\omega),y_j(\omega))\vert<2\epsilon.$$
By definition of $d_\pi$ the claim follows.  
\end{proof}

We now bound the probability that $B(\omega)/N$ is large.

\begin{lemma}
If $B$ is a $Binomial(N,p)$ random variable with $p<\epsilon<1/4$, then
$$Prob(B>N\epsilon^{1/2})<\epsilon^{1/2}.$$
\end{lemma}
\begin{proof}
As $B$ is a non-negative random variable and $E(B)=Np<N\epsilon$, the Markov inequality gives
$$P(B>N\epsilon^{1/2})\leq \frac{E(B)}{N\epsilon^{1/2}}<\frac{N\epsilon}{N\epsilon^{1/2}}=\epsilon^{1/2}.$$
\qedhere
\end{proof}

If $\epsilon<1/4$, by Lemma~\ref{cases} it follows that 
$$P(d_\pi(M_N^X(\omega),M_N^Y(\omega))>\epsilon^{1/2})<\epsilon^{1/2},$$
which implies that $d_P(\theta^X_N,\theta^Y_N;d_\pi)<\epsilon^{1/2}$. As this holds for all $\epsilon>d_{GHP}(X,Y)$, the result follows.
\end{proof}

Next, we show that the H\"older exponent $1/2$ is optimal.

\begin{theorem}\label{sharp}
Let $\alpha$ and $C>0$ be real numbers such that $1/2<\alpha<1$. Then for $\epsilon>0$ sufficiently small, there are spaces $X$ and $Y$ with $d_{GHP}(X,Y)\leq \epsilon$ so that there is an integer $N$ such that 
$$d_P(\theta^X_N,\theta^Y_N)>C\epsilon^\alpha.$$
\end{theorem}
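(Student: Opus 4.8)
The plan is to make the Markov-inequality step in the proof of Theorem~\ref{smallN} essentially tight, using the crudest possible example: a two-point space almost all of whose mass sits at one point, compared against a one-point space. Fix $\alpha\in(1/2,1)$ and $C>0$. For small $\epsilon>0$ let $X=\{a,b\}$ with $d(a,b)=1$ and $\mu_X(\{a\})=1-\epsilon$, $\mu_X(\{b\})=\epsilon$, and let $Y=\{a\}$ carry the unit mass. First I would check that $d_{GHP}(X,Y)\le\epsilon$: taking $Z=X$ and sending $Y$ to $a$, the coupling on $Z\times Z$ giving weight $1-\epsilon$ to $(a,a)$ and $\epsilon$ to $(b,a)$ has the correct marginals and $\Delta\le\epsilon$, since all but $\epsilon$ of the mass lies on the diagonal where $d=0$; alternatively this is immediate from Lemma~\ref{samespc} once $\mu_Y$ is regarded as $\delta_a$ on $X$.

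The crucial observation is that a sample of $N$ points from $Y$ is constant, so $\theta_N^Y$ is the point mass at the zero matrix $\mathbf{0}_N\in\D(N)$, and for a point mass the only coupling is the product coupling; hence
\[
d_P(\theta_N^X,\theta_N^Y;d_\pi)=\inf\{r>0:\ P\big(d_\pi(M_N^X,\mathbf{0}_N)\le r\big)\ge 1-r\}.
\]
If a sample of $N$ points from $X$ contains $B$ copies of $b$, then $B$ is binomial with parameters $N$ and $\epsilon$, and directly from the definition of $d_M$ and $d_\pi$ one gets $d_\pi(M_N^X,\mathbf{0}_N)=\min(B,N-B)/N$: to force every surviving entry of $M_N^X$ below some $\rho\le 1$ one must delete either all the $b$-rows or all the $a$-rows. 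In particular this quantity takes values in $\{0,1/N,2/N,\dots\}$ and vanishes exactly when $B\in\{0,N\}$.

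Now choose an integer $N\in[\,3C\epsilon^{\alpha-1},\,4C\epsilon^{\alpha-1}\,]$, which is nonempty for $\epsilon$ small. Since $\alpha>1/2$ we have $2\alpha-1>0$, so $NC\epsilon^\alpha\le 4C^2\epsilon^{2\alpha-1}<1$ for $\epsilon$ small; consequently $1/N\ge\tfrac{1}{4C}\epsilon^{1-\alpha}$, which is much larger than $\epsilon^\alpha$. Hence for every $r<1/N$ — a range that strictly contains $[0,C\epsilon^\alpha]$ — the event $\{d_\pi(M_N^X,\mathbf{0}_N)\le r\}$ is exactly $\{B=0\text{ or }B=N\}$, of probability $(1-\epsilon)^N+\epsilon^N$. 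On the other hand $N\epsilon\ge 3C\epsilon^\alpha$ and $N\epsilon\to 0$, so $1-(1-\epsilon)^N\ge 1-e^{-N\epsilon}\ge\tfrac12 N\epsilon\ge\tfrac32 C\epsilon^\alpha$, while $\epsilon^N=o(\epsilon^\alpha)$ because $N\to\infty$; therefore $(1-\epsilon)^N+\epsilon^N\le 1-\tfrac54 C\epsilon^\alpha$ for $\epsilon$ small. Thus the defining inequality $P(d_\pi(M_N^X,\mathbf{0}_N)\le r)\ge 1-r$ fails for all $r<\tfrac54 C\epsilon^\alpha$, and so $d_P(\theta_N^X,\theta_N^Y;d_\pi)\ge\tfrac54 C\epsilon^\alpha>C\epsilon^\alpha$.

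The one delicate point is the bookkeeping with the L\'evy--Prokhorov definition: one must verify that the inequality $P(d_\pi\le r)\ge 1-r$ fails on a whole half-open interval $[0,q)$ with $q>C\epsilon^\alpha$, not merely at the single value $r=C\epsilon^\alpha$, which is why one keeps a constant-factor slack ($\tfrac32$ versus $1$) in the binomial estimate and why one needs $1/N$ to dominate $C\epsilon^\alpha$ so that the event $\{d_\pi\le r\}$ stays equal to $\{B\in\{0,N\}\}$ throughout that interval. Everything else is routine binomial arithmetic, and the condition $1/N\gg\epsilon^\alpha$ — equivalently $\epsilon^{1-2\alpha}\to\infty$, equivalently $\alpha>1/2$ — is precisely what forces the optimality of the exponent $1/2$: for $\alpha\le 1/2$ no choice of $N$ separates the scales $\epsilon$ (size of the mismatched mass) and $\epsilon^{1/2}$ (size of the fluctuation of $B/N$), and the construction collapses, in agreement with Theorem~\ref{smallN}.
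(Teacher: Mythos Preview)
Your proof is correct and follows the same strategy as the paper's: pick two tiny finite spaces whose measures differ on a set of mass $\epsilon$, choose $N$ so that $NC\epsilon^\alpha<1$ (hence no rows may be deleted in the definition of $d_M$), and then show that the binomial probability $P(M_N^X\neq \mathbf 0)$ already exceeds $C\epsilon^\alpha$.

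The difference is in the choice of example. The paper compares two two-point spaces $X=\{a,b\}$, $Y=\{c,d\}$ with $d(a,b)=2C$, $d(c,d)=4C$ and the same $(1-\epsilon,\epsilon)$ weights; the mismatch $2C$ between any nonzero entry of $M_N^X$ and any entry of $M_N^Y$ forces $d_\pi(M_N^X,M_N^Y)>C\epsilon^\alpha$ whenever $M_N^X\neq\mathbf 0$, for \emph{every} coupling, and then a Bonferroni estimate gives $P(M_N^X\neq\mathbf 0)>C\epsilon^\alpha$. Your choice of $Y$ as a single point is cleaner: $\theta_N^Y$ is the Dirac mass at $\mathbf 0_N$, so there is literally one coupling and $d_P(\theta_N^X,\theta_N^Y)$ reduces to a one-variable infimum which you compute exactly via $d_\pi(M_N^X,\mathbf 0)=\min(B,N-B)/N$. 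This buys you a shorter argument and avoids having to tailor the distances $2C,4C$ to the constant in the statement. The paper's construction, on the other hand, illustrates that the phenomenon persists even when both spaces have the same cardinality, which is mildly more informative. Your careful remark that the defining inequality for $d_P$ must fail on a full interval $[0,q)$ with $q>C\epsilon^\alpha$, not just at the single point $r=C\epsilon^\alpha$, is a point the paper leaves implicit.
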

\begin{proof}
Suppose the claimed inequality is violated for some $C>0$ and $\alpha\in (1/2,1)$. Consider spaces $X=\{a,b\}$ and $Y=\{c,d\}$ with $d(a,b)=2C$ and $d(c,d)=4C$. We consider the measures on these spaces with weights $1-\epsilon$ for $a$ and $c$ and $\epsilon$ for $b$ and $d$. Then by considering embeddings of $X$ and $Y$ into a space $Z$ with $3$ points so that the images of $a$ and $c$ under the respective embeddings coincide,  we see that $d_{GHP}(X,Y)\leq \epsilon$.

%Let $x_1$,\dots,$x_N$ and $y_1$,\dots,$y_N$ be independently selected points from $X$ and $Y$, respectively. Let $M_N^X$ and $M_N^Y$ be the corresponding matrices. 

To show $d_P(\theta^X_N,\theta^Y_N)>C\epsilon^\alpha$, it suffices to show that for all joint distributions $\nu$ on $\D(N)\times \D(N)$ with marginals $\theta_N^X$ and $\theta_N^Y$, we have  the inequality
\begin{equation}\label{tpt}
P(d_\pi(M_N^X,M_N^Y)>C\epsilon^\alpha)>C\epsilon^\alpha.
\end{equation}
where the pair $(M_N^X,M_N^Y)$ has distribution $\nu$.

Assume $\epsilon<1$. Observe that each entry of a distance matrix $M_N^X$ for points in $X$ is either $2C$ or $0$ and for a distance matrix $M_N^Y$ for $Y$ is either $4C$ or $0$. Hence the difference between entries of $M_X$ and $M_Y$ is less than $C\epsilon^\alpha$ if and only if both the entries are zero. 

Pick $N$ so that we have
\begin{equation}\label{Nbound}
\frac{1}{2}<NC\epsilon^\alpha<1.
\end{equation}
If distance matrices $M_X$ and $M_Y$ for $X$ and $Y$ satisfy $d_\pi(M_X,M_Y)<C\epsilon^\alpha$, then by the above (as no rows can be omitted) they must both be the zero matrices. In particular,
\begin{equation}\label{allzero}
P(d_\pi(M_N^X,M_N^Y)>C\epsilon^\alpha)\geq P(M_N^X\neq 0).
\end{equation}

We find a lower bound for the right hand side of the above equation.
\begin{lemma}\label{matrx}
If $\epsilon>0$ is sufficiently small, $P(M_N^X\neq 0)>C\epsilon^\alpha$.
\end{lemma}
\begin{proof}
Let $x_1$, $x_2$,$\ldots$, $x_N$ be $N$ independent points sampled from $X$. 
Let $A_i$ be the event $A_i=\{x_i=b\}$. Then $M_N^X\neq 0$ is the event $\bigcup_{i=1}^N A_i\setminus \bigcap_{i=1}^N A_i$, hence
\begin{equation}\label{notconst}
P(M_N^X\neq 0)\geq P(\bigcup_{i=1}^N A_i)- P(\bigcap_{i=1}^N A_i).
\end{equation}
Now, the Bonferoni inequality gives
\begin{equation}\label{bon}
P(\bigcup_{i=1}^N A_i) \geq \sum_{i=1}^N P(A_i)-\sum_{1\leq i<j\leq n}P(A_i\cap A_j)
                       = N\epsilon- \frac{N(N-1)\epsilon^2}{2}.
\end{equation}
By Equation~\ref{Nbound}, we see that 
$$N>\frac{1}{2C\epsilon^\alpha},$$ 
so that
\begin{equation}\label{expn} 
N\epsilon>\frac{\epsilon^{1-\alpha}}{2C}.
\end{equation}
Observe that as $1/2<\alpha<1$, $1-\alpha<\alpha<1$. Hence, using Equation~\ref{expn} for $\epsilon$ sufficiently small, we have
\begin{equation}\label{maintrm}
N\epsilon- \frac{N(N-1)\epsilon^2}{2}>2C\epsilon^\alpha
\end{equation} 
Further, note that for $\epsilon$ sufficiently small, as $\alpha<1\leq N$,  
\begin{equation}\label{int}
P(\bigcap_{i=1}^N A_i)=\epsilon^N<C\epsilon^\alpha.
\end{equation}
Using Equations~\ref{notconst}, \ref{bon}, \ref{expn} and \ref{int}, we obtain that for $\epsilon$ sufficiently small.
\begin{equation}\label{lowbnd}
P(M_N^X\neq 0)\geq C\epsilon^\alpha 
\end{equation}
as required.
\end{proof}

Using Lemma~\ref{matrx} and Equation~\ref{allzero}, we obtain Equation~\ref{tpt}, completing the proof of Theorem~\ref{sharp}.

\end{proof}

\section{Relative entropy of metric measure spaces}\label{largedev}
Let $(X,d,\mu)$ be a (compact, as always) metric measure space and let $x_{1},\ldots ,x_{N}$ be points sampled independently from the measure $\mu$. Let $X_{N}=\{x_{1},\ldots ,x_{N}\}$ with the induced metric from $X$ and endowed with the measure $\mu_{N}=N^{-1}\sum_{k=1}^{N}\delta_{x_{k}}$. Then $(X_{N},d,\mu_{N})$ is a metric measure space and $d_{GHP}(X_{N},X)\rightarrow 0$ in probability  by Lemma~\ref{sampconv}. Recall that convergence in probability means  $P(d_{GHP}(X_{N},X)>\epsilon)\rightarrow 0$ for any $\epsilon>0$.  It is natural to ask  whether a {\em large deviation principle} holds for  this convergence. We quickly recall what this means. For more details we refer to the comprehensive book by Dembo and Zeitouni~\cite{dembozeitouni}.

\begin{definition} Let $\mathcal X$ denote the space of all metric measure spaces endowed with the metric $d_{GHP}$. Let $X_{N},X$ be as above. Let $I:\mathcal X \rightarrow [0,\infty]$ be a lower semicontinuous function. We say that a large deviation principle holds for the sequence $X_N$ with the  rate function $I$  if for any Borel set $A\subseteq \mathcal X$ with interior $A^{\circ}$ and closure $\bar{A}$, we have
$$
 -\inf_{Y\in A^{\circ}}I(Y)\le \liminf_{N\rightarrow \infty}\frac{\log P(X_{N}\in A)}{N}\le \limsup_{N\rightarrow \infty} \frac{\log P(X_{N}\in A)}{N}\le  -\inf_{Y\in \bar{A}}I(Y).
$$
Usually it is desirable  that the rate function be {\em good}, meaning that the set $\{Y\in \mathcal X: \ I(Y)\le t\}$ is compact in $\mathcal X$ for every $t\in [0,\infty)$.
\end{definition}
The same definition can be made for random variables taking values in  any metric space in place of $\mathcal X$ (see page 5 of \cite{dembozeitouni}). To understand the meaning of this, let $(Y,\rho,\nu)$ be any metric measure space and let $B$   be the  $\delta$-ball of radius $\delta$ in $\mathcal X$ around $Y$. If $\bar{B}$  is disjoint from $X$, then  Lemma~\ref{sampconv} implies that $P(X_{N}\in \bar{B})\rightarrow 0$. However, if the large deviation principle holds, then we will have $P(X_{N}\in \bar{B})\approx e^{-Na}$ where $a=\inf_{Z\in \bar{B}}I(Z)$. Thus the probability for the sampled metric measure space $X_{N}$ to ``look like'' a space $Y$ decays exponentially fast for $Y\not= X$ (if $a>0$). 

Apart from the naturalness of the question, another reason for asking for a large deviation principle is that if it holds, the rate function $I_{X}(Y)$ will be something that may be called the {\em relative entropy} of the metric measure space $Y$ with respect to $X$. To illustrate this point, we recall the well-known theorem of Sanov.
\begin{example}
 Let $X$ and $X_{N}$ be as above, but now we only consider the convergence of $\mu_{N}$ to $\mu$ in the space of probability measures on $X$. By Prohorov's theorem, the space of probability measures on $X$ is a compact metric space. Sanov's theorem (see page 263 of \cite{dembozeitouni}) asserts that $\mu_{N}$ satisfy a large deviation principle with the good rate function 
$$
I_{\mu}(\nu)=\begin{cases}
\int \log \frac{d\nu}{d\mu} \ d\nu & \mbox{ if }\nu \mbox{ is absolutely continuous to }\mu. \\
\infty & \mbox{ otherwise. } \end{cases}
$$
$I_{\mu}(\nu)$, often denoted $D(\nu |\!| \mu)$, is called the {\em relative entropy} or the {\em Kullback-Liebler divergence} of $\nu$ with respect to $\mu$. For example, if $\mu=\frac{1}{2}\delta_{0}+\frac{1}{2}\delta_{1}$ and $\nu=p\delta_{0}+(1-p)\delta_{1}$, then $D(\nu |\!| \mu) =\log 2 + p\log p+(1-p)\log (1-p) $.
\end{example} 
Our question is to find the corresponding rate function for the convergence of $X_{N}$ to $X$. A detailed proof would be long and technical, but we just state what we expect to be the rate function and give a heuristic reason.

\begin{definition} For two metric measure spaces $(X,d,\mu)$ and $(Y,\rho,\nu)$, define the ``relative entropy''  of $Y$ with respect to $X$ by
$$
I_{X}(Y) = \inf \left\{ D(\iota_{*}\nu|\!| \mu) : \ \iota \mbox{ is  an isometric embedding of $Y$ into $X$}\right\}.
$$
\end{definition}
Then, we expect that the large deviation principle holds for the convergence of $X_{N}$ to $X$ with the rate function $I_{X}(\cdot)$. The reason is simply as follows.

Consider any embedding $\iota:Y\to X$. The probability that the empirical measure $\mu_{N}$ falls in  $B_{\delta}(\iota_{*}\nu)$ (the ball of radius $\delta$ around $\mu$ in the space of probability measures on $X$) with a probability of about $\exp\{-N D(\iota_{*}\nu |\!| \mu)\}$, by Sanov's theorem cited earlier. Since we can choose any embedding, the one with the maximum probability should dominate, giving rise to the definition of $I_{X}(Y)$. 

 It is not our intention to give a detailed proof of this statement here, but to introduce at the quantity $I_{X}(Y)$,  which may be called the relative entropy of the metric measure space $Y$ with respect to $X$.
% \begin{example} Let $X=[0,1]$ with $\mu(dx)=x(1-x)dx$. Let $Y=\{0,\frac{1}{2}\}$ with $\nu\{0\}=\nu\{1/2\}=\frac{1}{2}$. Now consider $X_{N}$. Fix $0<a<b<1$ such that $b=a+1/2$. If the number of sample points $x_{k}$ that fall in $I_{a}=(a-\delta,a+\delta)$ and $I_{b}=(b-\delta,b+\delta)$ are both between $N(1/2-\delta/2)$ and $N(1/2+\delta/2)$, then $d_{GHP}(X_{N},Y)<\delta$. It is also clear that this is essentially the only way in which $X_{N}$ can be close to $Y$. 

%If we simplify and ask for $N/2$ points in $I_{a}$ and $I_{b}$, the probability  is 
%$$
%\binom{N}{N/2}\left(\mu(I_{a})\mu(I_{b})\right)^{N/2}. 
%$$   
%We can choose any $a$ (with $b=a+1/2$), and we choose it so that it  maximizes the quantity $\frac{1}{2}\log \mu(I_{a})+\frac{1}{2}\log\mu(I_{b})$

%\end{example}

\end{document}